\newcommand{\bmu}{\mathbf{\upmu}}
\newcommand{\Z}{\mathbb{Z}}
\renewcommand{\P}{\mathbb{P}}
\newcommand{\F}{\mathbb{F}}
\newcommand{\G}{\mathbb{G}}
\newcommand{\Q}{\mathbb{Q}}
\newcommand{\C}{\mathbb{C}}
\newcommand{\psum}[1]{ \underset{#1}{{\sum}^\prime}}
\newtheorem{Theo}{Theorem}
\newtheorem{Prop}[Theo]{Proposition}
\newtheorem{Defn}[Theo]{Definition}
\newtheorem{Lemma}[Theo]{Lemma}
\newtheorem{Cor}[Theo]{Corollary}
\newtheorem*{MainThm}{Main Theorem}
\theoremstyle{remark}
\newtheorem*{Remark}{Remark}
\newtheorem{Exam}{Example}
\begin{document}

\title{Class number formulas via 2-isogenies \\ of elliptic curves}

\author{Cam McLeman\\ University of Michigan-Flint \and Christopher
  Rasmussen\\ Wesleyan University}

\date{}

\maketitle

\let\thefootnote\relax\footnotetext{This article has been accepted for
  publication in the Bulletin of the London Mathematical
  Society. However, it does not reflect any changes or corrections
  between acceptance and publication; the two versions may be
  different.}

\let\thefootnote\relax\footnotetext{2000 Mathematics Subject
  Classification: 11G05 (primary), 11R29, 11T24, 11G20 (secondary)}

\let\thefootnote\relax\footnotetext{This research was supported in
  part by the Van Vleck Fund at Wesleyan University.}

\begin{abstract}
  A classical result of Dirichlet shows that certain elementary
  character sums compute class numbers of quadratic imaginary number
  fields. We obtain analogous relations between class numbers and a
  weighted character sum associated to a $2$-isogeny of elliptic
  curves.
\end{abstract}

\section{Introduction} % use lowercase except for proper names
\label{intro}

We begin by recalling a famous result of Dirichlet which calculates
the class number of a quadratic imaginary number field of prime discriminant
via a finite sum.  Throughout, we let~$p > 3$ denote a prime
number. Let $(\frac{\cdot}{p})$ denote the usual Legendre symbol on
$\F_p^\times$:
\begin{equation*}
\left( \frac{a}{p} \right) = \left\{ \begin{array}{rcl}
+1 & & a \in \F_p^{\times 2} \\
-1 & & a \not\in \F_p^{\times 2}.
\end{array}
\right.
\end{equation*}
The symbol is extended to all of $\Z$ via the reduction map $\Z \to
\F_p$, and the definition $(\frac{a}{p}) = 0$ when $(a,p) > 1$. We let
$h_p$ denote the class number of $\Q(\sqrt{-p})$. For notational
convenience, we set
\begin{equation*}
h_p^* := \left\{ \begin{array}{rcl} 0\phantom{{}_p} & & p \equiv 1
      \pmod{4} \\
h_p & & p \equiv 3 \pmod{4}.
\end{array}
\right.
\end{equation*}
The following consequence of Dirichlet's class number formula (e.g.,
\cite[Ch.~6]{Davenport:2000}) is well-known:
\begin{Theo}[Dirichlet]\label{ACNF}
For any prime $p > 3$, 
\begin{equation}\label{eq:Dirichlet}
-\frac{1}{p} \sum_{x=1}^{p-1} x \bigl( \tfrac{x}{p} \bigr) = h_p^*.
\end{equation}
\end{Theo}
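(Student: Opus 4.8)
The plan is to deduce the finite identity from the analytic class number formula, by recognizing that (up to an elementary factor) the sum in \eqref{eq:Dirichlet} computes the special value $L(1,\chi)$ of the Dirichlet $L$-function attached to the character $\chi = \bigl(\tfrac{\cdot}{p}\bigr)$. The case $p \equiv 1 \pmod 4$ is immediate: then $\chi$ is even, so $\chi(p-x) = \chi(x)$, and replacing $x$ by $p-x$ in $\sum_{x=1}^{p-1} x\,\chi(x)$ together with $\sum_{x=1}^{p-1}\chi(x) = 0$ forces this sum to equal its own negative, hence to vanish, matching $h_p^* = 0$.

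Assume henceforth $p \equiv 3 \pmod 4$. Then by quadratic reciprocity $\chi$ is the odd primitive quadratic character of conductor $p$ cutting out $K = \Q(\sqrt{-p})$, which (as $p > 3$) has discriminant $-p$ and exactly $w = 2$ roots of unity. Dirichlet's analytic class number formula therefore gives
\begin{equation*}
L(1,\chi) \;=\; \frac{2\pi h_p}{w\sqrt{p}} \;=\; \frac{\pi h_p}{\sqrt{p}},
\end{equation*}
so it suffices to prove the elementary evaluation $L(1,\chi) = -\tfrac{\pi}{p\sqrt{p}}\sum_{x=1}^{p-1} x\,\chi(x)$.

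For this I would proceed in four steps. (i) For $s>1$, insert the finite Fourier expansion $\chi(n) = \tau(\chi)^{-1}\sum_{a=1}^{p-1}\chi(a)\,\zeta^{an}$ (valid since $\chi$ is real), with $\zeta = e^{2\pi i/p}$, into $L(s,\chi) = \sum_{n\ge1}\chi(n)n^{-s}$; interchange the now absolutely convergent sums and let $s \to 1^{+}$ (Abel's theorem) to obtain $L(1,\chi) = -\tau(\chi)^{-1}\sum_{a=1}^{p-1}\chi(a)\log(1-\zeta^{a})$. (ii) Invoke Gauss's evaluation of the sign of the quadratic Gauss sum, $\tau(\chi) = i\sqrt{p}$. (iii) Apply the identity $-\log(1-e^{i\theta}) = -\log\!\bigl(2\sin\tfrac{\theta}{2}\bigr)+\tfrac{i}{2}(\pi-\theta)$ for $0<\theta<2\pi$; the real part contributes $\sum_{a}\chi(a)\log\!\bigl(2\sin(\pi a/p)\bigr)$, which vanishes because the logarithmic factor is fixed by $a \mapsto p-a$ while $\chi$ is odd. (iv) What remains is
\begin{equation*}
L(1,\chi) \;=\; \frac{1}{i\sqrt{p}}\sum_{a=1}^{p-1}\chi(a)\cdot\frac{i\pi}{2}\Bigl(1-\frac{2a}{p}\Bigr)
\;=\; -\frac{\pi}{p\sqrt{p}}\sum_{a=1}^{p-1} a\,\chi(a),
\end{equation*}
where $\sum_{a}\chi(a)=0$ kills the constant term; comparing with the class number formula yields $h_p = -\tfrac{1}{p}\sum_{x=1}^{p-1} x\,\chi(x)$.

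The technical heart is step~(ii) --- pinning down the \emph{sign} $\tau(\chi)=+i\sqrt{p}$ is the classical but genuinely nontrivial theorem of Gauss --- together with the care required in step~(i) to interchange a conditionally convergent series with a finite sum. After those, the argument is routine manipulation of roots of unity. Alternatively, one may simply quote the analytic class number formula, the Gauss sum sign, and this whole derivation from \cite[Ch.~6]{Davenport:2000}.
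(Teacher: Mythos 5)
Your argument is correct: the even case $p\equiv 1\pmod 4$ is handled by the symmetry $x\mapsto p-x$, and the odd case follows from the analytic class number formula combined with the standard Gauss-sum evaluation of $L(1,\chi)$, including the correct sign $\tau(\chi)=i\sqrt{p}$. The paper offers no proof of its own --- it simply cites \cite[Ch.~6]{Davenport:2000} --- and your derivation is precisely the argument found there, so there is nothing further to compare.
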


We consider the following point of view for Dirichlet's result. The
$\F_p$-rational morphism~$\phi \colon \G_m \to \G_m$ defined by
$\phi(x) = x^2$ partitions the points of $\G_m(\F_p)\cong \F_p^\times$
into two sets -- those that are the image of an $\F_p$-rational point
of the domain (i.e., the quadratic residues), and those that are not
(the non-residues). The character $\chi_\phi := ( \frac{ \cdot }{p} )$
is now precisely the natural identification of the cokernel of $\phi$
with $\{\pm 1\}$ which makes the following sequence exact:
\begin{equation*}
  \xymatrix{
\G_m(\F_p) \ar[r]^\phi & \G_m(\F_p) \ar^(.55){\chi_\phi}[r] & \{ \pm1 \}
\ar[r] & 0\vphantom{0_n}  
}.
\end{equation*}

Note that it is the properties of $\phi$, not the underlying algebraic
group $\G_m$, which allow this construction. This paper demonstrates
that an analogous procedure, arising from a different morphism of
algebraic groups, yields new character sums with similar arithmetic
properties. Let $\tau$ be a degree $2$ isogeny of elliptic curves
defined over $\F_p$. We define a weighted character sum $S_\tau$,
analogous to the sum appearing in \eqref{eq:Dirichlet}. The quantity
$S_\tau$ is shown to be divisible by~$p$, and a strong relationship
between $S_\tau$ and $h_p^*$ is established.

\begin{Remark}
  Throughout this article, we study sums of the form $\sum g(x)
  \chi(x)$, where \emph{a priori} the values of $g(x)$ lie in the
  finite field $\F_p$. We use the following convention, so as to view
  the value of the sum as an integer: Each summand is the scaling of
  the character value $\chi(x) \in \bmu_2(\C) = \{ \pm 1 \}$ by the
  unique integral lift of $g(x)$ in the range $[0,p)$. For clarity, we
  use braces $\{ \cdot \}$ to denote the lifting $\F_p \to \Z \cap
  [0,p)$ explicitly.

In some proofs, it will be convenient to view sums of the form
$\sum_{x=0}^{p-1}$ interchangeably as sums over $\F_p$ or as sums over
the range $[0,p)$ of integers.  Consequently, there will occasionally
be a mild abuse of notation -- for example, given $a \in \Z$, we may
write $\{ a \}$ to mean $\{\overline{a}\}$, where $\overline{a}$ is the
reduction of $a$ mod $p$.
\end{Remark}

\subsection*{CM Example}
We begin with two typical results, both special cases of the main
theorem. Consider first the elliptic curve
\begin{equation}\label{eq:CMneg2curve}
E/\Q \colon \qquad y^2 = (x+2)(x^2-2),
\end{equation}
which possesses complex multiplication by $\Z[\sqrt{-2}]$. As this
ring possesses elements of absolute norm $2$, there exist
endomorphisms of degree $2$ on $E$. These endomorphisms admit
reductions defined over $\F_p$ whenever $p$ is a prime of good and
ordinary reduction (equivalently, $(\frac{-2}{p}) = 1$). For a
specific choice of $\tau$ (see \S4), we set 
\begin{equation}\label{eq:chitau}
\chi_\tau(P) = \left\{ \begin{array}{rcl}
+1 & & P \in \tau \bigl( E(\F_p) \bigr) \\
-1 & & P \not\in \tau \bigl( E(\F_p) \bigr).
\end{array}
\right.
\end{equation}
This defines a character on $E(\F_p)$, and the following sequence is exact:
\begin{equation*}
  \xymatrix{
    E(\F_p) \ar[r]^\tau & E(\F_p) \ar[r]^(.55){\chi_\tau} & \bmu_2 \ar[r] &
    0\vphantom{E(\F_p)}  
  }.
\end{equation*}
The following is a consequence of the main theorem:
\begin{Prop}\label{-2ex}
  Let $p>3$ be a prime of good and ordinary reduction for the elliptic
  curve~$E$ given in \eqref{eq:CMneg2curve}. Then
\begin{equation}\label{eq:CMneg2}
-\frac{1}{p} \sum_{\substack{P \in E(\F_p) \\ P \neq \infty}} \{ x(P) \} \chi_\tau(P) = h_p^*.
\end{equation}
\end{Prop}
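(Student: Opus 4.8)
The plan is to make the character $\chi_\tau$ completely explicit using the classical theory of descent by a $2$-isogeny, then to regroup $S_\tau$ according to $x$-coordinates, and finally to reduce the resulting elementary sums to the Dirichlet sum of Theorem~\ref{ACNF}. For the first step, note that the curve in \eqref{eq:CMneg2curve} has $j$-invariant $8000$, the CM $j$-invariant of $\Q(\sqrt{-2})$, confirming the complex multiplication by $\Z[\sqrt{-2}]$; the isogeny $\tau$ of \S4 is (a model of) the quotient of $E$ by $\ker[\sqrt{-2}]$, and since $[\sqrt{-2}]$ and $[-\sqrt{-2}]$ have the same kernel, this kernel is $\mathrm{Gal}(\overline{\Q}/\Q)$-stable and hence equals the subgroup $\{\infty,(-2,0)\}$ generated by the unique rational $2$-torsion point. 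Completing the square by $u=x+2$ puts $E$ in the form $y^2=u(u^2-4u+2)$ with this point at the origin, and the standard descent homomorphism attached to the quotient by $\langle(0,0)\rangle$ sends $P\mapsto u(P)=x(P)+2$ for $P\neq(-2,0),\infty$ and $(-2,0)\mapsto 8$; the isomorphism $E/\ker\tau\cong E$ needed to view $\tau$ as an endomorphism exists over $\F_p$ exactly when $(\tfrac{-2}{p})=1$ (the ordinarity hypothesis) and multiplies these values by a fixed nonzero square. Since $(\tfrac{-2}{p})=1$, the conclusion is
\begin{equation*}
\chi_\tau(P)=\bigl(\tfrac{x(P)+2}{p}\bigr)\quad(P\neq(-2,0),\infty),\qquad\chi_\tau\bigl((-2,0)\bigr)=\bigl(\tfrac{2}{p}\bigr)=\bigl(\tfrac{-1}{p}\bigr).
\end{equation*}

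Next I would regroup $S_\tau$ by $x$-coordinate. For $x\neq-2$ there are exactly $1+\bigl(\tfrac{(x+2)(x^2-2)}{p}\bigr)$ points of $E(\F_p)$ above $x$, and each has $\chi_\tau$-value $\bigl(\tfrac{x+2}{p}\bigr)$; isolating the contribution of $(-2,0)$ (and observing that its ``default'' summand $\{-2\}\bigl(\tfrac{0}{p}\bigr)$ is $0$, so nothing is lost by extending the remaining sum over all of $\F_p$) gives
\begin{equation*}
S_\tau=(p-2)\bigl(\tfrac{-1}{p}\bigr)+\sum_{x\in\F_p}\Bigl(1+\bigl(\tfrac{(x+2)(x^2-2)}{p}\bigr)\Bigr)\{x\}\bigl(\tfrac{x+2}{p}\bigr).
\end{equation*}
Expanding the product, using $\bigl(\tfrac{(x+2)^2(x^2-2)}{p}\bigr)=\bigl(\tfrac{x^2-2}{p}\bigr)$ for $x\neq-2$ and pulling out the $x=-2$ term, the two stray multiples of $p-2$ cancel because $\bigl(\tfrac{-1}{p}\bigr)=\bigl(\tfrac{2}{p}\bigr)$, leaving
\begin{equation*}
S_\tau=\sum_{x\in\F_p}\{x\}\bigl(\tfrac{x+2}{p}\bigr)+\sum_{x\in\F_p}\{x\}\bigl(\tfrac{x^2-2}{p}\bigr).
\end{equation*}

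Finally I would evaluate the two sums. For the first, the substitution $y=x+2$ together with a short bookkeeping of the integral lifts gives $\sum_x\{x\}\bigl(\tfrac{x+2}{p}\bigr)=\sum_y\{y\}\bigl(\tfrac{y}{p}\bigr)+p=-p\,h_p^*+p$, where $\sum_y\{y\}\bigl(\tfrac{y}{p}\bigr)=\sum_{y=1}^{p-1}y\bigl(\tfrac{y}{p}\bigr)=-p\,h_p^*$ by Theorem~\ref{ACNF}. For the second, the involution $x\mapsto-x$ (under which $\{x\}+\{-x\}=p$ for $x\neq0$ while $\bigl(\tfrac{x^2-2}{p}\bigr)$ is fixed) gives $\sum_x\{x\}\bigl(\tfrac{x^2-2}{p}\bigr)=\tfrac{p}{2}\sum_{x\neq0}\bigl(\tfrac{x^2-2}{p}\bigr)=\tfrac{p}{2}(-1-1)=-p$, using the standard evaluation $\sum_{x\in\F_p}\bigl(\tfrac{x^2-2}{p}\bigr)=-1$ together with $\bigl(\tfrac{-2}{p}\bigr)=1$. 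Combining, $S_\tau=(-p\,h_p^*+p)+(-p)=-p\,h_p^*$, which is exactly \eqref{eq:CMneg2}.

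The step I expect to be the main obstacle is the first: correctly pinning down $\chi_\tau$. One must verify that the chosen $\tau$ has kernel the rational $2$-torsion subgroup -- a conjugate choice would replace $x+2$ by $x\mp\sqrt2$ and produce a genuinely different sum -- and must carefully track the twisting isomorphism $E/\ker\tau\cong E$ and the special value of the descent map at $(-2,0)$, any slip in which alters the arithmetic meaning of $S_\tau$. Once $\chi_\tau$ has been identified, the remaining manipulations are elementary and rest on Dirichlet's formula.
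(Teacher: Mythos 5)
Your argument is correct and follows essentially the same route as the paper: your descent-map identification of $\chi_\tau$ as $P\mapsto\bigl(\tfrac{x(P)+2}{p}\bigr)$ (with value $\bigl(\tfrac{2}{p}\bigr)$ at $(-2,0)$) is exactly the paper's Tate-pairing computation (Proposition \ref{prop:chi_psi} transported by the translation $\alpha_2$), and your splitting of $S_\tau$ via $\#\{P : x(P)=x\}=1+\bigl(\tfrac{f(x)}{p}\bigr)$ into a Dirichlet-type sum plus a sum $\sum\{u\}\bigl(\tfrac{u^2-2}{p}\bigr)$ evaluated by the $x\mapsto -x$ involution and a conic point count reproduces, in the special case $(a,b)=(2,\tfrac12)$, the paper's Lemma \ref{lemma:quadcalc} and Proposition \ref{bigprop}. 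The only difference is that you carry out the computation directly on $E$ rather than first normalizing to the model \eqref{eq:E1E2} and invoking the general formula with $R_{2,1/2}=\delta_{-1/2}-1=0$; all your intermediate identities check out.
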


\begin{Remark}
It is not \emph{a priori} clear that the sum in
  \eqref{eq:CMneg2} (nor the sum in \eqref{eq:Dirichlet}, for that
  matter) is divisible by $p$. We note that the two sums
  appearing in \eqref{eq:Dirichlet} and \eqref{eq:CMneg2} are
  \emph{not} the same expressions, even though they both compute
  $h_p^*$. This is immediate from the observation that $E$ and $\G_m$
  need not have the same number of points over $\F_p$. Here is an
explicit example: When $p = 11$, one has
\begin{equation*}
E(\F_{11}) = \{\infty, (7, \pm  4), (8, \pm 2), (9,0)\}, \qquad 
\tau \bigl( E(\F_{11}) \bigr) = \{\infty, (7, \pm 4) \}.
\end{equation*}
Hence, \eqref{eq:CMneg2} evaluates as 
\begin{equation*}
-\frac{1}{11} (7 + 7 - 8 - 8 - 9) = 1,
\end{equation*}
whereas the classical expression \eqref{eq:Dirichlet} yields
\begin{equation*}
-\frac{1}{11} (1 - 2 + 3 + 4 + 5 - 6 - 7 - 8 + 9 - 10) = 1.
\end{equation*}
\end{Remark}

\subsection*{Non-CM Example}
This connection between weighted character sums and class numbers is
not unique to isogenies arising from complex multiplication. Consider
the elliptic curves
\begin{equation*}
\begin{split}
E_1/\Q \colon & \qquad y^2 = x^3 + 2x^2 - \phantom{8}x, \\
E_2/\Q \colon & \qquad y^2 = x^3 - 4x^2 + 8x,
\end{split}
\end{equation*}
and the following isogeny of degree two:
\begin{equation*}
  \tau \colon E_1 \to E_2, \qquad \qquad \tau(x,y) = \left(
    \frac{y^2}{x^2}, -\frac{y(1+x^2)}{x^2} \right).
\end{equation*}
The curves $E_1$ and $E_2$ have good reduction away from $2$. For any
odd prime $p$, $\tau$ induces an isogeny between the reductions of
$E_1$ and $E_2$ over $\F_p$, and this morphism is in fact
$\F_p$-rational. Hence, there exists a homomorphism $\tau \colon
E_1(\F_p) \to E_2(\F_p)$. As in the previous example, we consider the
character
\begin{equation*}
\chi_\tau \colon E_2(\F_p) \to \{ \pm 1 \},
\end{equation*}
where $\chi_\tau(P) = +1$ if and only if $P \in \tau \bigl( E_1(\F_p)
\bigr)$. Again we find a strong relationship between~$h_p^*$ and the
weighted character sum
\begin{equation*}
S_\tau := \sum_{\substack{P \in E_2(\F_p) \\ P \neq \infty }} \bigl\{ x(P) - 2 \bigr\} \chi_\tau(P).
\end{equation*}
The following is a special case of the main theorem (see \S4, Example \ref{ex:intro2}):
\begin{Prop}\label{secondex}
With $E_1$, $E_2$, and $\tau$ as above, and any prime $p>3$, we have $-\frac{1}{p} S_\tau = h_p^*$.
\end{Prop}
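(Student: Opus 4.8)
The plan is to make every ingredient explicit and then reduce to Dirichlet's Theorem~\ref{ACNF}. Write $f_2(x) = x^3 - 4x^2 + 8x = x\bigl((x-2)^2 + 4\bigr)$, so that $E_2 \colon y^2 = f_2(x)$, and note that $E_1$ and $E_2$ are the curves $y^2 = x(x^2+ax+b)$ and $Y^2 = X\bigl(X^2 - 2aX + (a^2-4b)\bigr)$ for $(a,b) = (2,-1)$, with $\tau$ the standard $2$-isogeny whose kernel is $\{\infty, (0,0)\}$. The first step is to describe $\chi_\tau$ concretely through the classical formula for the connecting homomorphism of the $2$-descent exact sequence attached to $\tau$: for a point $P = (x_0,y_0) \in E_2(\F_p)$ with $x_0 \neq 0$ one has $\chi_\tau(P) = \bigl(\tfrac{x_0}{p}\bigr)$, while at the $2$-torsion point $\chi_\tau\bigl((0,0)\bigr) = \bigl(\tfrac{a^2-4b}{p}\bigr) = \bigl(\tfrac{8}{p}\bigr) = \bigl(\tfrac{2}{p}\bigr)$. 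Pinning down this identification -- in particular the value at $(0,0)$ -- is the first point requiring care.

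Next I would collapse the sum over points into a sum over $x$-coordinates. Over a given $x_0 \in \F_p$ there are $1 + \bigl(\tfrac{f_2(x_0)}{p}\bigr)$ affine points of $E_2(\F_p)$, and when $x_0 \neq 0$ all of them carry the common value $\bigl(\tfrac{x_0}{p}\bigr)$ of $\chi_\tau$; isolating the single point $(0,0)$ then gives
\[
S_\tau \;=\; \underbrace{\sum_{x_0 \in \F_p}\{x_0 - 2\}\bigl(\tfrac{x_0}{p}\bigr)}_{\Sigma_1} \;+\; \underbrace{\sum_{\substack{x_0 \in \F_p \\ x_0 \neq 0}}\{x_0 - 2\}\bigl(\tfrac{x_0 f_2(x_0)}{p}\bigr)}_{\Sigma_2} \;+\; (p-2)\bigl(\tfrac{2}{p}\bigr),
\]
where $\{-2\} = p-2$. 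In $\Sigma_1$, replacing $\{x_0 - 2\}$ by the ordinary integer $x_0 - 2$ changes the sum only at $x_0 \in \{0,1\}$, where the integral lift jumps by $p$, contributing $p\bigl((\tfrac{0}{p}) + (\tfrac{1}{p})\bigr) = p$; together with $\sum_{x_0}(\tfrac{x_0}{p}) = 0$ and Dirichlet's formula~\eqref{eq:Dirichlet} this gives $\Sigma_1 = p - p\,h_p^*$.

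For $\Sigma_2$, the factorization $x_0 f_2(x_0) = x_0^2\bigl((x_0-2)^2 + 4\bigr)$ shows the symbol equals $\bigl(\tfrac{(x_0-2)^2 + 4}{p}\bigr)$ whenever $x_0 \neq 0$; adding back the $x_0 = 0$ term and substituting $u = x_0 - 2$ rewrites $\Sigma_2$ as $\sum_{u \in \F_p}\{u\}\bigl(\tfrac{u^2 + 4}{p}\bigr) - (p-2)\bigl(\tfrac{2}{p}\bigr)$. The involution $u \mapsto -u$ fixes $u^2 + 4$ and sends $\{u\} \mapsto p - \{u\}$ for $u \neq 0$, so $\sum_u \{u\}\bigl(\tfrac{u^2+4}{p}\bigr) = \tfrac{p}{2}\sum_{u \neq 0}\bigl(\tfrac{u^2+4}{p}\bigr)$; the elementary evaluation $\sum_{u \in \F_p}\bigl(\tfrac{u^2+4}{p}\bigr) = -1$ (valid since $p > 3$), with the $u=0$ term equal to $1$, then makes this $\tfrac{p}{2}(-2) = -p$, so $\Sigma_2 = -p - (p-2)\bigl(\tfrac{2}{p}\bigr)$. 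Adding the three contributions, the two copies of $\pm(p-2)\bigl(\tfrac{2}{p}\bigr)$ cancel -- precisely because $\chi_\tau\bigl((0,0)\bigr) = \bigl(\tfrac{2}{p}\bigr)$ -- and the two stray $\pm p$ cancel as well, leaving $S_\tau = -p\,h_p^*$, i.e., $-\tfrac{1}{p}S_\tau = h_p^*$.

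I expect the genuine obstacle to be Step~1 in combination with this terminal cancellation: the identity comes out clean only because the weight shift ``$x(P) - 2$'' and the coefficients defining $E_1$ and $E_2$ are calibrated so that the $2$-descent value $\chi_\tau\bigl((0,0)\bigr)$ is exactly the residue symbol $\bigl(\tfrac{2}{p}\bigr)$ left over from $\Sigma_1$ and $\Sigma_2$. Verifying this is in effect verifying the hypotheses of the forthcoming main theorem for the particular triple $(E_1, E_2, \tau)$, which is presumably how the proposition is obtained in \S4; the computation sketched above is the content of that verification in the present case.
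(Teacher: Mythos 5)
Your argument is correct and is essentially the paper's own proof specialized to $(a,b)=(2,-1)$: the descent-map description of $\chi_\tau$ (including the value $\bigl(\tfrac{a^2-4b}{p}\bigr)=\bigl(\tfrac{2}{p}\bigr)$ at $(0,0)$) is exactly Proposition~\ref{prop:chi_psi} obtained there via the Tate pairing, your $\Sigma_1$/$\Sigma_2$ split with the extra torsion-point term is the decomposition in the proof of Proposition~\ref{bigprop}, and your evaluation of $\sum_u\{u\}\bigl(\tfrac{u^2+4}{p}\bigr)=-p$ is Lemma~\ref{lemma:quadcalc} with $k=4$. The paper packages the leftover terms as the error $R_{a,b}=\delta_{-b}-\sum_{x=1}^{\{a\}-1}\bigl(\tfrac{x}{p}\bigr)$, which vanishes identically for $(a,b)=(2,-1)$ (Example~\ref{ex:intro2}), matching your terminal cancellation.
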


\subsection*{Main result}

The main theorem generalizes the previous examples, which each relate
$h_p^*$ to a weighted character sum. Suppose $E_1/\Q$, $E_2/\Q$ are
elliptic curves and $\tau \colon E_1 \to E_2$ is a $\Q$-rational
$2$-isogeny. For any prime $p$ of good reduction, there is an
$\F_p$-rational isogeny $\tau_p \colon E_1/\F_p \to E_2/\F_p$. Hence,
$\tau$ induces a family of isogenies $\{ \tau_p \}$, indexed by the
primes of good reduction. To each of these isogenies, there is an
associated character $\chi_\tau = \chi_{\tau,p}$, and an associated
weighted character sum $S_{\tau,p}$ (defined below). As in the above
examples, the quotient $-\frac{1}{p} S_{\tau,p}$ always approximates
$h_p^*$ well, in the sense that there is an absolute bound for the
error as $p$ varies among all primes of good reduction.

Concretely, let $a, b \in \Z$, and let $E_1/\Q$ and $E_2/\Q$ be the
elliptic curves given by the following Weierstrass models:
\begin{equation}\label{eq:E1E2}
\begin{split}
E_1 \colon & \quad y^2 = x^3 + \phantom{2}ax^2 + bx, \\
E_2 \colon & \quad y^2 = x^3 - 2ax^2 + (a^2-4b)x
\end{split}
\end{equation}
Let $\tau \colon E_1 \rightarrow E_2$ be the explicit isogeny given in
\eqref{eq:tau_formulas}.

\begin{MainThm}
  \emph{ Let $E_1$, $E_2$, and $\tau$ be as above, and let $p>3$ be a
    prime of good reduction for $E_1$ and $E_2$.
\begin{enumerate}[(a)]
\item For all such $p$, the weighted character sum
\begin{equation*}
  S_{\tau, p} := \sum_{\substack{P \in E_2(\F_p) \\ P \neq \infty }}\bigl\{ x(P)-a \bigr\} \chi_\tau(P)
\end{equation*}
is divisible by $p$.
\item $S_{\tau, p}$ approximates $-p h_p^*$ in the following sense:
  the quantity
\begin{equation*}
R_{a,b}(p) = -\frac{1}{p} S_{\tau, p} - h_p^*
\end{equation*}
is bounded in absolute value by a constant $C_\tau$, independent of
$p$.
\item If there exists one $p>|a|$ such that $R_{a,b}(p) = 0$, then
  there exists a set of primes of positive density for which $S_{\tau,
    p} = h_p^*$, determined by explicit congruence conditions.
\end{enumerate}}
\end{MainThm}

The remainder of the paper is organized as follows. In \S2, we collect
relevant facts about $2$-isogenies over finite fields, and compute the
cokernel character $\chi_\tau$ in terms of the Tate pairing. In \S3,
we prove the main theorem. The proof combines classical techniques for
evaluating character sums with formulas deduced from the Tate
pairing. In \S4, we extend the theorem to isogenies not specifically
of the form \eqref{eq:tau_formulas}, in particular the degree $2$
endomorphisms existing on elliptic curves with complex multiplication
by $\Z[\sqrt{-1}]$, $\Z[\sqrt{-2}]$, and $\Z[\sqrt{-7}]$.  An
analogous result for the dual isogeny $\hat{\tau}$ of
\eqref{eq:tau_formulas} is given in the Appendix.

\section{Preliminaries}

Our strategy for the proof of the Main Theorem will be to convert the
characters in the introduction into explicitly computable Legendre
symbols.  For this, we recall some facts about isogenies of degree 2
and the mechanics of the Tate pairing attached to an isogeny.

\subsection*{Degree 2 isogenies}
Let $K$ be an arbitrary field with $\mathrm{char}\, K \neq 2$, and let
$E_1$ and $E_2$ be elliptic curves defined over~$K$. Let $\phi \colon
E_1 \to E_2$ be an isogeny of degree $2$ defined over
$K$. Necessarily, this implies that both $E_1$ and $E_2$ possess
$K$-rational points of order $2$ (generating the kernels of $\phi$ and
the dual~$\hat{\phi}$, respectively). Hence, by appropriate changes of
coordinates, $E_1$ and $E_2$ are isomorphic over~$K$ to elliptic
curves $E_1'$ and $E_2'$, respectively, which each possess Weierstrass
equations of the form~$y^2=f_i(x)$, with $f_1(0) = f_2(0) = 0$. In
fact, we may always simultaneously choose isomorphisms $\alpha_i
\colon E_i \to E_i'$ such that:
\begin{enumerate}[(i)]
\item $E_1'$ has the Weierstrass model $y^2 = x^3 + \phantom{2}ax^2 + bx$ with
$a, b \in K$;
\item $E_2'$ has the Weierstrass model $y^2 = x^3 - 2ax^2 + rx$, with
  $r = a^2 - 4b$;
\item There exists a $K$-rational $2$-isogeny $\tau \colon E_1' \to
  E_2'$ such that the following diagrams commute:
\begin{equation}\label{eq:comm_diag}
\xymatrix{
E_1 \ar[d]_\phi \ar[r]^{\alpha_1} & E_1' \ar[d]^\tau & & E_2
\ar[d]_{\hat{\phi}} \ar[r]^{\alpha_2} & E_2' \ar[d]^{\hat{\tau}} \\
E_2 \ar[r]_{\alpha_2} & E_2' & & E_1 \ar[r]_{\alpha_1} & E_1'
}
\end{equation}
\end{enumerate}
The explicit formulas for $\tau$ and its dual are well-known
\cite[III.4.5]{Silverman:AEC}:
\begin{equation}\label{eq:tau_formulas}
  \tau(x,y) = \left( \frac{y^2}{x^2}, \frac{y (b-x^2) }{x^2} \right),
  \qquad 
\hat{\tau}(x, y) = \left( \frac{y^2}{4x^2}, \frac{y(r-x^2)}{8x^2} \right).
\end{equation}
Placing the isogeny in this form will simplify the computations
involving the Tate pairing.  We will show how to treat more general
2-isogenies in Section 4.

\subsection*{The Tate pairing}
Let $\ell$ and $p$ be prime numbers such that $p \equiv 1
\pmod{\ell}$, and let $E_1$ and $E_2$ be elliptic curves defined
over $\F_p$. Let $\tau \colon E_1 \to E_2$ be an isogeny of degree
$\ell$ defined over $\F_p$, and let $\hat{\tau}$ denote the dual
isogeny. Let us assume that $E_1[\tau] \subseteq E_1(\F_p)$ and
$E_2[\hat{\tau}] \subseteq E_2(\F_p)$, and let $T_1$ and $T_2$
generate the groups $E_1[\tau]$ and $E_2[\hat{\tau}]$, respectively.
The Tate pairing associated to $\tau$ is a function
\begin{equation*}
\psi_\tau \colon \frac{ E_2(\F_p) }{ \tau(E_1(\F_p)) } \times
E_2[\hat{\tau}] \longrightarrow \F_p^\times/\F_p^{\times \ell}
\end{equation*}
which is bilinear and non-degenerate on the left. (See, for example,
\cite[X.1.1]{Silverman:AEC}, which develops the properties of
$\psi_{[m]}$. The proofs for $\psi_\tau$ are essentially identical.)
We can explicitly compute~$\psi_\tau$ as follows.  The points $T_1$
and $T_2$ are of exact order $\ell$, as is any $R \in
\tau^{-1}(T_2)$. Thus, there exist functions $f$ on $E_2$ and $g$ on
$E_1$ whose divisors are:
\begin{equation}\label{eq:divisors}
\begin{split}
(f) & = \ell(T_2) - \ell(\infty), \\
(g) & = \tau^* \bigl( (T_2) - (\infty) \bigr) = \sum_{i=0}^{\ell - 1}
(R + iT_1) - \sum_{i=0}^{\ell-1} (iT_1).
\end{split}
\end{equation}
Scaling either $f$ or $g$ by a constant if necessary, we have an
equality of functions $f \circ \tau = g^\ell$. For a point $S\in
E_2(\F_p)$, let $[S]$ denote its image in $E_2(\F_p)/\tau(E_1(\F_p))$.
If $S \in E_2(\F_p)$ and $[S]\neq[T_2],[\infty]$, we define
\begin{equation*}
\psi_\tau \bigl( [S], T_2 \bigr) := f(S) \pmod{ \F_p^{\times \ell}}. 
\end{equation*}
In case $[S] \in \{[T_2], [\infty]\}$, we choose a point $Q \not\in
\{T_2, \infty \}$ on $E_2$ and set
\begin{equation*}
\psi_\tau \bigl( [S], T_2 \bigr) := \frac{ f(S + Q) }{ f(Q) }
\pmod{ \F_p^{\times \ell}}.
\end{equation*}
As $E_2[\hat{\tau}]$ is generated by $T_2$, we may recover the entire
pairing by bilinearity, since we have~$\psi_\tau([S], kT_2) = \psi_\tau([S], 
T_2)^k$. Thus, the following definition is complete:
\begin{equation*}
\psi_\tau( [S], kT_2) := \left\{ \begin{array}{ccl}
f(S)^k & & [S] \notin \{[T_2], [\infty] \} \\
\left( \frac{ f(S + Q) }{f(Q)} \right)^k & & [S] \in \{[T_2], [\infty]
\}.
\end{array} \right.
\end{equation*}

\subsection*{Formulas for $2$-isogenies}
We now specialize to the case of the degree $2$ isogenies $\tau$ and
$\hat{\tau}$ given in \eqref{eq:tau_formulas}.
\begin{Defn}
Define a character $\chi_{\tau} \colon E_2(\F_p) \to \bmu_2$ by
\begin{equation*}
\begin{split}
\chi_\tau(P) = \left\{ \begin{array}{rcl}
+1 & & P \in \tau \bigl( E_1(\F_p) \bigr) \\
-1 & & P \notin \tau \bigl( E_1(\F_p) \bigr).
\end{array} \right.
\end{split}
\end{equation*}
\end{Defn}
We compute this character explicitly in terms of the pairing
$\psi_\tau$. This allows us to replace~$\chi_\tau$ with expressions
involving the Legendre symbol, and so evaluate the weighted character
sum~$S_\tau$ from the introduction. We let $T_1$ and $T_2$
respectively denote the point $(0,0)\in E_1(\F_p)$, which generates
$E_1[\tau]$, and the point $(0,0)\in E_2(\F_p)$, which generates
$E_2[\hat{\tau}]$.
\begin{Prop}\label{prop:chi_psi}
Suppose $\tau \colon E_1 \to E_2$ is of the form
\eqref{eq:tau_formulas}, and let $P = (x,y) \in E_2(\F_p)$.  Then
\begin{equation*}
\chi_\tau(P) = \psi_\tau([P], T_2) = \left\{
\begin{array}{ccl}
\bigl( \tfrac{x}{p} \bigr) & & [P] \neq [T_2], [\infty] \\
\bigl( \tfrac{r}{p} \bigr) & & [P] = [T_2] \\
1 & & [P] = [\infty],
\end{array}
\right.
\end{equation*}
where in the second equality we canonically identify
$\F_p^\times/\F_p^{\times 2}$ with $\bmu_2$ via the Legendre symbol.
\end{Prop}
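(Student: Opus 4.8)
The plan is to carry out the recipe of \S2 concretely: exhibit the function $f$ whose divisor is $2(T_2)-2(\infty)$, evaluate it on the three types of classes, and read off Legendre symbols. The first equality $\chi_\tau(P)=\psi_\tau([P],T_2)$ will be essentially formal. Indeed, $E_2[\hat\tau]=\{\infty,T_2\}$ is cyclic of order $2$ generated by $T_2$, so $\psi_\tau([P],\cdot)$ is trivial on $E_2[\hat\tau]$ if and only if $\psi_\tau([P],T_2)=1$. By non-degeneracy on the left, this holds exactly when $[P]=[\infty]$, i.e. exactly when $P\in\tau(E_1(\F_p))$, i.e. exactly when $\chi_\tau(P)=+1$. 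Since $\chi_\tau(P)$ and $\psi_\tau([P],T_2)$ both lie in $\bmu_2=\{\pm1\}$ once $\F_p^\times/\F_p^{\times 2}$ is identified with $\bmu_2$ via the Legendre symbol, they agree.

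Next I would identify $f$. On $E_2\colon y^2=x(x^2-2ax+r)$, with $r=a^2-4b\neq 0$ by smoothness, the point $T_2=(0,0)$ is the unique point with vanishing $x$-coordinate; since $T_2$ is $2$-torsion and $q(x)=x^2-2ax+r$ has $q(0)\neq 0$, the coordinate function $x$ has a double zero at $T_2$, and it has a double pole at $\infty$. Hence $(x)=2(T_2)-2(\infty)$, so we may take $f=x$, the $x$-coordinate function on $E_2$. Note we never need the auxiliary function $g$ or the relation $f\circ\tau=g^2$ for this computation: we only use that $\psi_\tau$ is well-defined and bilinear, which \S2 supplies.

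Then I would evaluate $\psi_\tau([P],T_2)=f(\cdot)\bmod\F_p^{\times 2}$ case by case. If $[P]\neq[T_2],[\infty]$ then $x(P)\neq 0$ (as $x(P)=0$ forces $P=T_2$), so directly $\psi_\tau([P],T_2)=x(P)\equiv(\tfrac{x}{p})$. If $[P]=[\infty]$ then $\psi_\tau(0,T_2)=1$ by bilinearity. If $[P]=[T_2]$, choose any $Q\notin\{T_2,\infty\}$; then $\psi_\tau([P],T_2)=f(T_2+Q)/f(Q)\bmod\F_p^{\times 2}$, and the group law makes this explicit: the line through $T_2=(0,0)$ and $Q=(x_0,y_0)$ is $y=mx$ with $m=y_0/x_0$, and substituting into the Weierstrass equation gives $x\bigl(x^2-(2a+m^2)x+r\bigr)=0$, whose two nonzero roots $x_0$ and $x(T_2+Q)$ multiply to $r$ by Vieta. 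Thus $f(T_2+Q)/f(Q)=x(T_2+Q)/x_0=r/x_0^2\equiv r\pmod{\F_p^{\times 2}}$, so $\psi_\tau([T_2],T_2)=(\tfrac{r}{p})$.

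The computation is short, and I expect the only delicate points to be bookkeeping rather than substance: checking $x(P)=0\Rightarrow[P]=[T_2]$ so that the first case genuinely avoids the zero of $f$; confirming the degenerate possibilities (trivial cokernel, or a choice of $Q$ with $x_0^2=r$) remain consistent with the stated formula; and fixing the identification $\F_p^\times/\F_p^{\times 2}\cong\bmu_2$ once and for all so that $\chi_\tau$ and $\psi_\tau(\cdot,T_2)$ are compared in the same group. The one step where the geometry of $E_2$ actually enters is the $[P]=[T_2]$ case, where the Vieta relation $x(T_2+Q)\cdot x(Q)=r$ does all the work; that is the step I would write out most carefully.
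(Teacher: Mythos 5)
Your proof is correct, and for the first equality and the generic case of the second it coincides with the paper's argument (left non-degeneracy plus cyclicity of $E_2[\hat{\tau}]$, then $f=x$ and read off the Legendre symbol). Where you genuinely diverge is the case $[P]=[T_2]$: you evaluate $\psi_\tau([T_2],T_2)=f(T_2+Q)/f(Q)$ directly from the group law, using the chord $y=mx$ through $T_2$ and $Q$ and Vieta's relation $x(T_2+Q)\,x(Q)=r$ to get $r/x(Q)^2\equiv r$. The paper instead sidesteps the auxiliary point $Q$ entirely: having already shown $\chi_\tau=\psi_\tau(\cdot,T_2)$, it observes that $T_2\in\tau(E_1(\F_p))$ if and only if $\tau^{-1}(T_2)\subset E_1[2]$ is rational, i.e.\ $x^2+ax+b$ splits over $\F_p$, i.e.\ $\bigl(\tfrac{r}{p}\bigr)=+1$. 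Your route is a self-contained computation of the pairing value (and yields the pleasant identity $x(T_2+Q)x(Q)=r$ as a byproduct); the paper's is shorter and avoids the degenerate-choice-of-$Q$ bookkeeping you correctly flag as the delicate step. One caution: your remark that the relation $f\circ\tau=g^2$ is never needed is not quite right. The divisor $2(T_2)-2(\infty)$ determines $f$ only up to a multiplicative constant, and in the generic case the recipe of \S2 evaluates $f(S)$ on a single point, so scaling $f$ by a non-square would flip the sign of the answer. The normalization $f\circ\tau=g^2$ is exactly what pins $f$ down modulo squares (it forces $f=cx$ with $c$ a square, since $cx\circ\tau=c(y/x)^2$); you happen to choose the correctly normalized $f=x$, so your computation stands, but the verification $x\circ\tau=(y/x)^2$ that the paper records is doing real work and should be kept.
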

\begin{proof}
  For the statement that $\chi(\cdot)=\psi_{\tau}(\cdot,T_2)$, we must
  show that a point $P\in E_2(\F_p)$ is in the image of $\tau$ if and
  only if $\psi_{\tau}([P],T_2)=1$. By bilinearity, $\psi_{\tau}([P],
  kT_2) = \psi_{\tau}([P], T_2)^k$. As~$E_2[\hat{\tau}]$ is generated
  by $T_2$, $P$ pairs trivially with $T_2$ if and only if it pairs
  trivially with every element of $E_2[\hat{\tau}]$. By the left
  non-degeneracy of $\psi_\tau$, this occurs if and only if $[P]$
  represents the trivial class of $E_2(\F_p)/\tau(E_1(\F_p))$, i.e.,
  $P$ is in the image of $\tau$.

  To prove the second equality, we now compute $\psi_{\tau}([P],T_2)$
  explicitly. Functions $f$ and $g$ whose divisors are
  given in \eqref{eq:divisors} are $f(x,y) = x$, $g(x,y) =
  \frac{y}{x}$. No scaling is necessary, as we have
\begin{equation*}
f( \tau(P)) = x(\tau(P)) = \frac{y^2}{x^2} = g^2(P).
\end{equation*}
Thus, $\psi_\tau([P], T_2)$ is a square in $\F_p^\times$ if and only
if $( \frac{x}{p} ) = 1$. This proves the second equality when $[P] \neq
[T_2], [\infty]$. The result is trivial for $[P] = [\infty]$. It remains to
prove $\psi_\tau( [T_2], T_2 ) = ( \frac{r}{p} )$. Notice that $T_2
\in \tau( E_1(\F_p))$ if and only if $E_1[2] \subseteq
E_1(\F_p)$. From this we see 
\begin{eqnarray*}
\psi_\tau( [T_2], T_2) = +1 & \Longleftrightarrow & E_1[2]
\subseteq E_1(\F_p) \\
& \Longleftrightarrow & x^2 + ax + b \mbox{ splits in $\F_p[x]$} \\
& \Longleftrightarrow & r = a^2 - 4b \equiv \square \pmod{p} \\
& \Longleftrightarrow & ( \tfrac{r}{p} ) = +1.
\end{eqnarray*}
\end{proof}
\begin{Remark}
In fact, $[T_2] = [\infty]$ if and only if $\bigl( \frac{r}{p} \bigr) = 1$.
\end{Remark}
\subsection*{Dual Isogeny Formulas}
Finally, we note that we can provide an equally explicit result for
the pairing attached to the dual $\hat{\tau}$. The cokernel character
$\chi_{\hat{\tau}}$ is defined by
\begin{equation*}
\chi_{\hat{\tau}}(P) = \left\{ \begin{array}{rcl}
+1 & & P \in \hat{\tau}( E_2(\F_p)) \\
-1 & & P \notin \hat{\tau}( E_2(\F_p)).
\end{array} \right.
\end{equation*}
Let $\hat{f}$ and $\hat{g}$ be the functions on $E_1$ whose
divisors are
\begin{equation*}
\begin{split}
(\hat{f}) & = 2(T_1) - 2(\infty), \\
(\hat{g}) & = \hat{\tau}^* \bigl( (T_1) - (\infty) \bigr).
\end{split}
\end{equation*}
For example, take $\hat{f}(x,y) = x$, and $\hat{g}(x,y) =
\frac{y}{2x}$. Then the associated pairing \begin{equation*}
\psi_{\hat{\tau}} \colon \frac{ E_1(\F_p) }{ \hat{\tau}(E_2(\F_p)) }
\times E_1[\tau] \longrightarrow \frac{\F_p^\times}{\F_p^{\times 2}}
\end{equation*}
may be computed via
\begin{equation*}
\psi_{\hat{\tau}} ( [S], kT_1 ) = \left\{ \begin{array}{ccl}
\hat{f}(S)^k & & [S] \notin \{[T_1], [\infty] \} \\
\left( \frac{ \hat{f}(S + Q) }{ \hat{f}(Q) } \right)^k & & [S] \in
\{[T_1], [\infty] \}.
\end{array} \right.
\end{equation*}
\begin{Prop}\label{prop:chi_psi_dual}
For any $P = (x,y) \in E_1(\F_p)$,
\begin{equation*}
\chi_{\hat{\tau}} = \psi_{\hat{\tau}}( [P], T_1) =
\left\{ \begin{array}{ccl} 
( \tfrac{x}{p} ) & & [P] \neq [T_1], [\infty] \\
( \tfrac{b}{p} ) & & [P] = [T_1] \\
1 & & [P] = [\infty].
\end{array} \right.
\end{equation*}
\end{Prop}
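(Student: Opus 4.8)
The plan is to mirror the proof of Proposition~\ref{prop:chi_psi} almost line for line, interchanging the roles of $E_1$ and $E_2$ and of the quantities $b$ and $r$. First I would establish the equality $\chi_{\hat\tau}(\cdot) = \psi_{\hat\tau}(\cdot, T_1)$ by the same formal argument used there: by bilinearity $\psi_{\hat\tau}([P], kT_1) = \psi_{\hat\tau}([P], T_1)^k$, and since $E_1[\tau]$ is cyclic of order $2$ with generator $T_1$, a class $[P]$ pairs trivially with $T_1$ if and only if it pairs trivially with every element of $E_1[\tau]$; by the left non-degeneracy of $\psi_{\hat\tau}$ this happens exactly when $[P]$ is the trivial class in $E_1(\F_p)/\hat\tau(E_2(\F_p))$, i.e.\ when $P \in \hat\tau(E_2(\F_p))$.

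Next I would compute $\psi_{\hat\tau}([P], T_1)$ explicitly using the functions $\hat f(x,y) = x$ on $E_1$ and $\hat g(x,y) = \frac{y}{2x}$ on $E_2$, whose divisors are the ones displayed just before the statement. One checks that these are already correctly normalized: from the formula for $\hat\tau$ in \eqref{eq:tau_formulas}, $\hat f(\hat\tau(x,y)) = \frac{y^2}{4x^2} = \hat g(x,y)^2$, so that $\hat f \circ \hat\tau = \hat g^2$ with no scaling constant. Hence for $P = (x,y) \in E_1(\F_p)$ with $[P] \neq [T_1], [\infty]$ one gets $\psi_{\hat\tau}([P], T_1) = \hat f(P) = x \pmod{\F_p^{\times 2}}$, which is $(\tfrac{x}{p})$ under the Legendre identification (note $x \neq 0$ here, since $T_1$ is the unique point of $E_1$ with vanishing $x$-coordinate), and the case $[P] = [\infty]$ is immediate since the pairing is trivial there.

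The remaining case $[P] = [T_1]$ is where a little care is needed, and it is the step I expect to be the main obstacle, exactly as in Proposition~\ref{prop:chi_psi}: since $\hat f(T_1) = 0 \notin \F_p^\times$, one cannot simply plug $T_1$ into $\hat f$, so instead I would argue that $\psi_{\hat\tau}([T_1], T_1) = +1$ if and only if $T_1 \in \hat\tau(E_2(\F_p))$, which in turn holds if and only if $E_2[2] \subseteq E_2(\F_p)$. The point is that $\hat\tau^{-1}(T_1)$ consists of the two $2$-torsion points of $E_2$ not lying in $\ker\hat\tau = \{T_2, \infty\}$ (they are $2$-torsion because $\tau \circ \hat\tau = [2]$, and they cannot lie in $\ker\hat\tau$ since they map to $T_1 \neq \infty$), and these are defined over $\F_p$ precisely when all of $E_2[2]$ is. From the model $y^2 = x(x^2 - 2ax + r)$ of $E_2$, this holds if and only if $x^2 - 2ax + r$ splits over $\F_p$, i.e.\ its discriminant $4a^2 - 4r = 16b$ is a square mod $p$, i.e.\ $(\tfrac{b}{p}) = +1$. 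This yields $\psi_{\hat\tau}([T_1], T_1) = (\tfrac{b}{p})$ and completes the identification. Everything beyond the $[T_1]$ case is a direct transcription of the proof already given for $\psi_\tau$, so I do not anticipate any further difficulty.
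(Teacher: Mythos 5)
Your proof is correct and is exactly the argument the paper intends: its own proof of Proposition~\ref{prop:chi_psi_dual} consists of the single line ``the argument parallels Proposition~\ref{prop:chi_psi} exactly,'' and your write-up is precisely that parallel argument carried out, including the one genuinely new computation (the discriminant $4a^2-4r=16b$ in the $[P]=[T_1]$ case, which correctly yields $(\tfrac{b}{p})$). No gaps.
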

\begin{proof}
The argument parallels Proposition \ref{prop:chi_psi} exactly. 
\end{proof}

\section{Weighted character sums}

There are well-established connections between arithmetic data and
character sums arising from elliptic curves. If $y^2 = f(x)$ is an
integral model for an elliptic curve over $\Q$ with complex
multiplication, and $p$ is a prime of good reduction, then the work of
Deuring \cite{Deuring:1941} demonstrates that character sums of the
form
\begin{equation*}
\sum_{x=1}^{p-1} \left( \tfrac{ f(x)}{p} \right)
\end{equation*}
can be computed in terms of the trace of Frobenius and the splitting
of $p$ in the endomorphism ring of the curve. Similar results have
been established by many different authors. We mention, for example,
the works of Williams \cite{Williams:1979}, Joux-Morain
\cite{Joux:1995}, and Padma-Venkataraman \cite{Padma:1996}, which each
take slightly different approaches to such character sums, consolidate
many previous results, and contain comprehensive bibliographies.

We briefly contrast these character sums to those contained in the
present paper. First, the character $\chi_\tau$ is determined by an
isogeny, not an elliptic curve. Second, the terms of the sum are
weighted by a non-trivial integer-valued function\footnote{This is
  necessary if we hope to obtain interesting results, since trivially
  we have $\sum_P \chi_\tau(P) = 0$.}. To the authors' knowledge,
these sums have not been extensively studied.

We recall our convention to use $\{ \cdot \}$ to denote the lifting
$\F_p \to \Z \cap [0,p)$, and introduce a second convention that a
primed sum over the points on an elliptic curve will exclude the point
at infinity on that curve. Under these conventions, we seek to
evaluate the sum
\begin{equation}\label{chisums}
\psum{P \in E_2(\F_p)} \{x(P) - a\} \chi_\tau(P),
\end{equation}
where $\tau$ is the isogeny given in \eqref{eq:tau_formulas}. Here, we
are taking $E_1$, $E_2$, and $\tau$ to be defined over $\F_p$.

We first evaluate a useful character sum. For a given prime $p$ and $k
\in \Z$ such that $p \nmid k$, we define
\begin{equation*}
\delta_k := \tfrac{1}{2} \left( 1 + \bigl( \tfrac{k}{p} \bigr) \right) 
= \left\{ \begin{array}{rcl}
1 & & k \equiv \square \pmod{p}, \\
0 & & k \not\equiv \square \pmod{p}.
\end{array} \right.
\end{equation*}
\begin{Lemma}\label{lemma:quadcalc}
For any integer $k$ relatively prime to $p$,
\begin{equation}\label{eq:quad_sum}
\sum_{u=1}^{p-1} u \left( \frac{u^2 + k}{p} \right) = -p\delta_k.
\end{equation}
\end{Lemma}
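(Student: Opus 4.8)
The plan is to exploit the symmetry $u \mapsto p-u$ to collapse the weighted sum into an \emph{unweighted} quadratic character sum, and then to evaluate that sum by a point count on a conic. Write $S = \sum_{u=1}^{p-1} u \bigl( \frac{u^2+k}{p} \bigr)$. Reindexing by $u \mapsto p-u$, which permutes $\{1,\dots,p-1\}$, and using $(p-u)^2 + k \equiv u^2+k \pmod p$ (so the Legendre symbols agree), one gets
\[
S = \sum_{u=1}^{p-1} (p-u)\left( \frac{u^2+k}{p} \right) = p\sum_{u=1}^{p-1}\left( \frac{u^2+k}{p} \right) - S,
\]
hence $2S = p\sum_{u=1}^{p-1}\left( \frac{u^2+k}{p} \right)$. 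It therefore suffices to prove $\sum_{u=1}^{p-1}\left( \frac{u^2+k}{p} \right) = -2\delta_k$; then $S = -p\delta_k$, the division by $2$ being harmless over $\Z$ because the right side is even.

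The key step is to compute the full sum $\Sigma := \sum_{u=0}^{p-1}\left( \frac{u^2+k}{p} \right)$ by counting $\F_p$-points on the affine conic $C \colon v^2 - u^2 = k$. On one hand, for each fixed $u$ the number of $v$ with $v^2 = u^2+k$ is $1 + \left( \frac{u^2+k}{p} \right)$ (this holds even when $u^2+k \equiv 0$, using $\left( \frac{0}{p} \right)=0$), so $\#C(\F_p) = p + \Sigma$. On the other hand, since $p$ is odd the substitution $(s,t) = (v-u,\, v+u)$ is a bijection of $\F_p^2$ carrying $C$ to the hyperbola $st = k$; as $p \nmid k$ this has exactly $p-1$ points (one for each $s \neq 0$). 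Thus $p + \Sigma = p-1$, i.e. $\Sigma = -1$. Removing the $u=0$ term, which contributes $\left( \frac{k}{p} \right)$, gives $\sum_{u=1}^{p-1}\left( \frac{u^2+k}{p} \right) = -1 - \left( \frac{k}{p} \right) = -2\delta_k$, and combining with the displayed identity finishes the proof.

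I do not expect a genuine obstacle here; the only care required is the bookkeeping of the degenerate case $u^2 + k \equiv 0 \pmod p$ (where both the Legendre symbol convention and the count $\#\{v : v^2 = c\} = 1 + \left( \frac{c}{p} \right)$ must be checked at $c=0$), together with the observation that the final halving is legitimate over $\Z$ because $\sum_{u=1}^{p-1}\left( \frac{u^2+k}{p} \right)$ is even. An alternative to the conic count would be to cite the classical evaluation $\sum_{x=0}^{p-1}\left( \frac{x^2+bx+c}{p} \right) = -1$ whenever $p \nmid b^2 - 4c$, applied with $b=0$ and $c=k$; but the self-contained argument above is cleaner and uses nothing beyond $p$ being odd and $p \nmid k$.
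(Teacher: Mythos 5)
Your proof is correct and follows essentially the same route as the paper: symmetrize via $u \mapsto p-u$ to reduce to the unweighted sum $\sum_u \bigl( \tfrac{u^2+k}{p} \bigr)$, then evaluate that by counting $\F_p$-points on the conic $v^2 - u^2 = k$. The only (cosmetic) difference is that you count the affine points directly via the substitution $(s,t)=(v-u,v+u)$ onto the hyperbola $st=k$, whereas the paper counts the $p+1$ points of the projective conic and subtracts the two at infinity; both give $p-1$ and the rest of the argument is identical.
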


\begin{proof}
Let $S$ be the sum. Then by substituting $u \mapsto p-u$, we find
\begin{equation*}
S = \sum_{u=1}^{p-1} ( p-u ) \left( \frac{(p-u)^2+k}{p} \right) = p
\sum_{u=1}^{p-1} \left( \frac{u^2+k}{p} \right) - S.
\end{equation*}
Therefore,
\begin{equation}\label{eq:2S_over_p}
\begin{split}
\frac{2S}{p} & = \sum_{u=1}^{p-1} \left( \frac{u^2+k}{p} \right) =
-\left( \frac{k}{p} \right) + \sum_{u=0}^{p-1} \left( \frac{u^2+k}{p}
\right) \\
& = -\left( \frac{k}{p} \right) - p + \sum_{u=0}^{p-1} \left[ 1
  + \left( \frac{u^2+k}{p} \right) \right].
\end{split}
\end{equation}
Let $C/\F_p$ be the conic $u^2 + kw^2 = v^2$. The final term in
\eqref{eq:2S_over_p} counts the number of $\F_p$-rational points on
$C$ within the affine region $w \neq 0$.  Since $(p, k) = 1$, the conic
is birational to $\P^1$ and has $p+1$ points. Exactly two of these,
$(1: \pm 1 : 0)$, lie on $w = 0$, so the affine region contains~$p-1$ points. 
Thus
\begin{equation*}
  S = \frac{p}{2} \left[ -\bigl( \tfrac{k}{p} \bigr) - p + (p-1) \right]
  = -\frac{p}{2} \left( 1 + \bigl( \tfrac{k}{p} \bigr) \right) = -p\delta_k,
\end{equation*}
which completes the proof.
\end{proof}

\begin{Remark}
  Sums of the form \eqref{eq:quad_sum} will appear in the proof below,
  with $u = x - a$. This motivates the choice $g = x(P) - a$ as the
  weight in \eqref{chisums}. We mention in passing that this function
  $g$ has an interesting geometric description: If $Q_i = (\alpha_i,
  0)$ are the $2$-torsion points on~$E_2$ which are not in the kernel
  of $\hat{\tau}$, then $a = \frac{1}{2}(\alpha_1 + \alpha_2)$. That
  is, we may think of $g = 0$ as the unique vertical line which
  intersects the $x$-axis at a point equidistant to both $Q_1$ and
  $Q_2$.
\end{Remark}

As always, let $p>3$ be prime.  We have already seen at the start of
\S2 that we may transform any $\F_p$-rational $2$-isogeny into the
isogeny $\tau$ of \eqref{eq:tau_formulas}. Then the isogeny $\tau$ is
between the curves
\begin{equation*}
\begin{split}
  E_1 \colon & \qquad y^2 = f_1(x) = x^3 + \phantom{2}ax^2 + bx,
  \qquad a, b \in \F_p, \\ 
  E_2 \colon & \qquad y^2 = f_2(x) = x^3 - 2ax^2 + rx, \qquad r = a^2
  - 4b.
\end{split}
\end{equation*}
Recall that $T:=(0,0)\in E_2(\F_p)$ generates the kernel of
$\hat{\tau}$.  Our goal is to evaluate the sum
\begin{equation}\label{eq:Stau_def} 
S_\tau := \psum{P \in E_2(\F_p)} \bigl\{ x(P) - a \bigr\} \chi_\tau(P).
\end{equation}
For convenience, we define an error term:
\begin{equation*}
R_{a,b} := \delta_{-b} - \sum_{x=1}^{\{a\}-1} \bigl( \tfrac{x}{p}
\bigr).
\end{equation*}
\begin{Prop}\label{bigprop}
  Let $p>3$ be prime, and $E_1$, $E_2$, $\tau$, and $S_\tau$ as above.
  Then $S_\tau$ is divisible by $p$ and
\begin{equation}\label{eq:Stau}
-\frac{1}{p} S_\tau = h_p^* + R_{a,b}.
\end{equation}
Further, $\left| R_{a,b} \right| \leq \{a\}$.
\end{Prop}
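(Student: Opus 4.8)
The plan is to split the sum $S_\tau = \psum{P \in E_2(\F_p)} \{x(P)-a\} \chi_\tau(P)$ according to the description of $\chi_\tau$ given in Proposition~\ref{prop:chi_psi}. For the generic points $P=(x,y)$ with $[P]\neq[T_2],[\infty]$ we have $\chi_\tau(P) = (\tfrac{x}{p})$, while the finitely many exceptional points (the point $T_2 = (0,0)$ and possibly its translates, plus points with $[P]=[T_2]$) contribute a bounded correction that will get absorbed into $R_{a,b}$. The first step is therefore to write $S_\tau$ as $\sum_{(x,y)\in E_2(\F_p), x\neq 0} \{x-a\}(\tfrac{x}{p})$ plus an explicit small correction, being careful about (i) the point $T_2=(0,0)$ itself, for which $x(P)-a = -a$ and $\chi_\tau$ takes the value $(\tfrac{r}{p})$, and (ii) the fact that a given $x\neq 0$ with $f_2(x)\neq 0$ contributes \emph{two} points $(x,\pm y)$, each with the same value of $\{x-a\}(\tfrac{x}{p})$.

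Next I would reorganize the main sum over $x$. Since each good $x$ (i.e.\ $f_2(x)$ a nonzero square) gives two points and each $x$ with $f_2(x)$ a nonsquare gives none, the count of points above $x$ is exactly $1 + (\tfrac{f_2(x)}{p})$ (with the value $1$ at the roots of $f_2$). So the bracketed sum becomes
\begin{equation*}
\sum_{x=1}^{p-1} \{x-a\}\Bigl(\tfrac{x}{p}\Bigr)\Bigl[1 + \Bigl(\tfrac{f_2(x)}{p}\Bigr)\Bigr] + (\text{correction at }x=0),
\end{equation*}
modulo a bounded error from the roots of $f_2$ where the multiplicity $1+(\tfrac{f_2(x)}{p})$ overcounts by one. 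The term $\sum_{x=1}^{p-1}\{x-a\}(\tfrac{x}{p})$ is, after shifting the lift (writing $\{x-a\}$ in terms of $x$ and of whether $x<\{a\}$, which introduces the $\sum_{x=1}^{\{a\}-1}(\tfrac{x}{p})$ piece of $R_{a,b}$ and a multiple of $p$), essentially the Dirichlet sum of Theorem~\ref{ACNF}, hence contributes $-p\,h_p^*$ up to the bounded defect. The remaining term $\sum_{x=1}^{p-1}\{x-a\}(\tfrac{x}{p})(\tfrac{f_2(x)}{p})$ is where Lemma~\ref{lemma:quadcalc} enters: since $f_2(x) = x(x^2 - 2ax + r) = x\bigl((x-a)^2 - b\bigr)$ (using $r - a^2 = -4b$, so $(x-a)^2 - a^2 + r = (x-a)^2 - 4b + \ldots$ — I'd verify $x^2-2ax+r = (x-a)^2 + (r-a^2) = (x-a)^2 - 4b + \ldots$; the point is $f_2(x)/x$ is a quadratic in $u=x-a$ with constant term $-b$ up to the precise completion of the square), we get $(\tfrac{f_2(x)}{p}) = (\tfrac{x}{p})\bigl(\tfrac{u^2+k}{p}\bigr)$ with $u = x-a$ and $k = -b$ (again modulo checking the exact constant). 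Then $(\tfrac{x}{p})^2 = 1$ on the relevant range, the weight $\{x-a\}$ becomes $u$ (up to the usual $\pm p$ from lifting), and the sum collapses to $\sum_u u\bigl(\tfrac{u^2+k}{p}\bigr) = -p\delta_{-b}$ by the Lemma. This produces the $\delta_{-b}$ term in $R_{a,b}$.

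Assembling: dividing by $-p$, the main pieces give $h_p^* + \delta_{-b} - \sum_{x=1}^{\{a\}-1}(\tfrac{x}{p}) = h_p^* + R_{a,b}$, and along the way every discrepancy (the $x=0$ point, the roots of $f_2$, the overcounting at ramification, the lifting corrections) must be shown to contribute an \emph{integer multiple of $p$}, which simultaneously proves $p \mid S_\tau$ and that no fractional error survives. The final bound $|R_{a,b}| \leq \{a\}$ follows from $\delta_{-b}\in\{0,1\}$ together with the estimate $\bigl|\sum_{x=1}^{\{a\}-1}(\tfrac{x}{p})\bigr| \leq \{a\}-1$ (trivially, as a sum of $\{a\}-1$ terms each $\pm 1$ or $0$), giving $|R_{a,b}| \le \max(1,\{a\}-1) \le \{a\}$ for $\{a\}\ge 1$ and $R_{a,b}=\delta_{-b}$ handled directly when $\{a\}=0$. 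The main obstacle I anticipate is bookkeeping: carefully tracking how the integer lift $\{x-a\}$ relates to $x$, $a$, and $p$ across the different ranges, and verifying that all the exceptional-point corrections (especially the subtle interplay between $[T_2]=[\infty]$ versus $[T_2]\neq[\infty]$, governed by $(\tfrac{r}{p})$) really do land in $p\Z$ rather than leaving a stray bounded-but-nonzero residue — the divisibility claim in part~(a) is genuinely a consequence of this step rather than something obvious a priori.
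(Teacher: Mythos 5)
Your proposal is correct and follows essentially the same route as the paper: convert $\chi_\tau$ to Legendre symbols via Proposition~\ref{prop:chi_psi}, count the points above each $x$ by $1+\bigl(\tfrac{f_2(x)}{p}\bigr)$, evaluate the linear piece with Dirichlet's formula (producing the $\sum_{x=1}^{\{a\}-1}\bigl(\tfrac{x}{p}\bigr)$ correction) and the quadratic piece with Lemma~\ref{lemma:quadcalc} after completing the square ($k=-4b$, and $\delta_{-4b}=\delta_{-b}$). Two small points of bookkeeping resolve more cleanly than you anticipate: there is no overcounting at the roots of $f_2$ (the multiplicity $1+\bigl(\tfrac{f_2(x)}{p}\bigr)$ is exact there), and the $T_2$ contribution $\{p-a\}\bigl(\tfrac{r}{p}\bigr)$ is not separately divisible by $p$ but is precisely the missing $x=0$ term of the quadratic sum, which is how the paper closes the range to $\sum_{x=0}^{p-1}$ before applying the lemma.
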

\begin{proof}
  The bound on $R_{a,b}$ is immediate. As $h_p^*$ and $R_{a,b}$ are
  integers, it remains only to establish \eqref{eq:Stau}. Applying
  Proposition \ref{prop:chi_psi}, we have:
% This set of equations was not splitting properly across the page break. So
% we broke it into two groups, and added the \hphantom line so that
% the alignment of = signs was preserved.
\begin{equation*}
\begin{split}
S_{\tau} & = \psum{P \in E_2(\F_p)} \bigl( x(P) - a \bigr)
\chi_\tau(P) \\
& = \{x(T)-a \}\,\chi_\tau(T) + \sum_{\substack{P \in
E_2(\F_p) \\ P \neq T, \infty}} 
\bigl\{ x(P) - a \bigr\} \bigl( \tfrac{x(P)}{p} \bigr) \\
& \hphantom{\{p-a\} \bigl( \tfrac{r}{p} \bigr) + \sum_{x=1}^{p-1} \{x - a\} \bigl(
\tfrac{x}{p} \bigr) + \sum_{x=1}^{p-1} \{x-a\} \bigl( \tfrac{ (x-a)^2 -
  4b }{p} \bigr)} 
\end{split}
\end{equation*}
\begin{equation*}
\begin{split}
& = \{p-a\} \bigl( \tfrac{r}{p} \bigr) + \sum_{x=1}^{p-1} \{x - a\} \bigl(
\tfrac{x}{p} \bigr) \bigl( 1 + \bigl( \tfrac{x^3 - 2ax^2 
  + rx}{p} \bigr) \bigr) \\
& = \{p-a\} \bigl( \tfrac{r}{p} \bigr) + \sum_{x=1}^{p-1} \{x - a\} \bigl(
\tfrac{x}{p} \bigr) + \sum_{x=1}^{p-1} \{x-a\} \bigl( \tfrac{ (x-a)^2 -
  4b }{p} \bigr) \\
& = \sum_{x=1}^{p-1} \{x-a\} \bigl( \tfrac{x}{p} \bigr) +
\sum_{x=0}^{p-1} \{x-a\} \bigl( \tfrac{ (x-a)^2 - 4b }{p} \bigr).
\end{split}
\end{equation*}
We evaluate these two sums in turn. First,
\begin{equation*}
\begin{split}
\sum_{x=1}^{p-1} \{x - a\} \bigl( \tfrac{x}{p} \bigr) &
= \sum_{x=1}^{\{a\}-1} (p + \{x\}- \{a\}) \bigl(
\tfrac{x}{p} \bigr) + \sum_{x=\{a\}}^{p-1} (\{x\}-\{a\})
\bigl( \tfrac{x}{p} \bigr) \\
& = p \sum_{x=1}^{\{a\}-1} \bigl( \tfrac{x}{p} \bigr) + \sum_{x=1}^{p-1}
x \bigl( \tfrac{x}{p} \bigr) - a\sum_{x=1}^{p-1} \bigl( \tfrac{x}{p} \bigr) \\
& = p \sum_{x=1}^{\{a\}-1} \bigl( \tfrac{x}{p} \bigr) - ph_p^*.
\end{split}
\end{equation*}
We note that the second sum is unchanged by making the substitution $u=x-a$. Thus,
\begin{equation*}
\sum_{x=0}^{p-1}\{x-a\}\bigl(\tfrac{(x-a)^2-4b}{p}\bigr) = \sum_{u=0}^{p-1}\{u\}\bigl(\tfrac{u^2-4b}{p}\bigr)=-p\delta_{-b},
\end{equation*}
by Lemma \ref{lemma:quadcalc}.  Combining the two sums, we have
\begin{equation*}
S_\tau = -ph_p^* - p \delta_{-b} + p \sum_{x=1}^{\{a\}-1} \bigl(
  \tfrac{x}{p} \bigr),
\end{equation*}
which gives \eqref{eq:Stau}.
\end{proof}
The Main Theorem can be viewed as the global version of Proposition
\ref{bigprop}. Consider elliptic curves $E_1/\Q$ and $E_2/\Q$ with
respective Weierstrass models
\begin{equation*}
\begin{split}
E_1 \colon y^2 & = x^3 + \phantom{2}ax^2 + bx, \\
E_2 \colon y^2 & = x^3 - 2ax^2 + (a^2-4b)x,
\end{split}
\end{equation*}
with $a, b \in \Z$. As always, we let $\tau \colon E_1 \rightarrow
E_2$ be the explicit $\Q$-rational 2-isogeny given in
\eqref{eq:tau_formulas}.

\begin{Theo}\label{thm:main}
Let $\tau \colon E_1 \to E_2$ be as above, and let $p>3$ be a prime of
good reduction.
\begin{enumerate}[(a)]
\item For all such $p$, the weighted character sum 
\begin{equation*}
S_{\tau, p} := \psum{P \in E_2(\F_p)} \bigl\{ x(P)-a \bigr\} \chi_\tau(P)
\end{equation*}
is divisible by $p$.
\item $S_{\tau, p}$ approximates $-p h_p^*$ in the following sense: the
  quantity
\begin{equation*}
R_{a,b}(p) = -\frac{1}{p} S_{\tau, p} - h_p^*
\end{equation*}
is bounded in absolute value by a constant $C_\tau$, independent of
$p$.
\item If there exists one $p>|a|$ such that $R_{a,b}(p) = 0$, then there
  exists a set of primes of positive density for which $S_{\tau, p} =
  h_p^*$, determined by explicit congruence conditions.
\end{enumerate}
\end{Theo}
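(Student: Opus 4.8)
The plan is to derive all three parts from the local computation in Proposition~\ref{bigprop} together with standard global facts about the reduction of $E_1$ and $E_2$. For part~(a), the divisibility of $S_{\tau,p}$ by $p$ is already asserted in Proposition~\ref{bigprop}, so nothing remains: the reduction $\tau_p$ of the global isogeny $\tau$ is, by the normalization of~\S2, exactly of the form~\eqref{eq:tau_formulas} with coefficients $\bar a, \bar b \in \F_p$, and Proposition~\ref{bigprop} applies verbatim. I would note that the reduction mod $p$ of the integral Weierstrass models~\eqref{eq:E1E2} is already in the required form, so no auxiliary change of coordinates (and hence no further error) is introduced.

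For part~(b), I would invoke Proposition~\ref{bigprop} to write
\begin{equation*}
R_{a,b}(p) = -\frac{1}{p}S_{\tau,p} - h_p^* = R_{\bar a, \bar b} = \delta_{-b} - \sum_{x=1}^{\{a\}-1}\Bigl(\tfrac{x}{p}\Bigr).
\end{equation*}
The first term $\delta_{-b} \in \{0,1\}$ is trivially bounded, and the character sum $\sum_{x=1}^{\{a\}-1}(\tfrac{x}{p})$ is a sum of at most $\{a\} < p$ terms each of absolute value $\le 1$. The subtlety is that $\{a\}$ depends on $p$ (it is the least nonnegative residue of the \emph{fixed integer} $a$), so for $p > |a|$ we have $\{a\} = a$ if $a \ge 0$ and $\{a\} = p + a$ if $a < 0$; in the latter case I would re-index the sum $\sum_{x=1}^{p+a-1}(\tfrac{x}{p}) = \sum_{x=1}^{p-1}(\tfrac{x}{p}) - \sum_{x=p+a}^{p-1}(\tfrac{x}{p}) = -\sum_{x=p+a}^{p-1}(\tfrac{x}{p})$, a sum of $|a|$ terms. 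Either way $|R_{a,b}(p)| \le 1 + |a|$ for all $p > |a|$, and the finitely many bad primes $3 < p \le |a|$ contribute only finitely many values, so one may take $C_\tau = \max(1+|a|, \max_{3 < p \le |a|} |R_{\bar a,\bar b}(p)|)$, a constant depending only on $\tau$ (equivalently on $a,b$).

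For part~(c), suppose $R_{a,b}(p_0) = 0$ for some $p_0 > |a|$. Since $p_0 > |a|$ we have $\{a\} = a$ when $a \ge 0$, and the hypothesis reads $\delta_{-b} = \sum_{x=1}^{a-1}(\tfrac{x}{p_0})$ (with the re-indexed version when $a < 0$). The key observation is that both sides of $R_{\bar a,\bar b} = 0$ are determined entirely by the residues $(\tfrac{x}{p})$ for $1 \le x \le |a|$ and by $(\tfrac{-b}{p})$ — that is, by the splitting behavior of $p$ in the fields $\Q(\sqrt{x})$ for $x \le |a|$ and in $\Q(\sqrt{-b})$ — and each such condition is a congruence condition on $p$ modulo $4|abx|$ or so, hence cuts out a union of arithmetic progressions. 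I would therefore argue: the set of primes $p$ for which $(\tfrac{x}{p}) = (\tfrac{x}{p_0})$ for every $1 \le x \le |a|$ and $(\tfrac{-b}{p}) = (\tfrac{-b}{p_0})$ is, by quadratic reciprocity (or Chebotarev for the compositum of the relevant quadratic fields), a set of primes of positive density defined by explicit congruences modulo $4\,\mathrm{lcm}$-of-the-discriminants; for every such $p$ one has $R_{a,b}(p) = R_{a,b}(p_0) = 0$, whence $-\frac1p S_{\tau,p} = h_p^*$, i.e.\ $S_{\tau,p} = -p\,h_p^*$. (I note the statement's "$S_{\tau,p} = h_p^*$" should read $-\frac1p S_{\tau,p} = h_p^*$, matching parts (a)--(b).) A mild point to handle is that when $a < 0$ the relevant residues are $(\tfrac{x}{p})$ for $p+a \le x \le p-1$, i.e.\ $(\tfrac{p-j}{p}) = (\tfrac{-j}{p})$ for $1 \le j \le |a|$, so the congruence conditions involve $\Q(\sqrt{-j})$ instead; either way only finitely many quadratic fields are involved and the density is positive.

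The main obstacle I anticipate is purely bookkeeping: carefully tracking the dependence of $\{a\}$ on $p$ through the two cases $a \ge 0$ and $a < 0$, and making sure the "explicit congruence conditions" in part~(c) are genuinely stated in terms of $p \bmod N$ for an $N$ computable from $a$ and $b$. The analytic input (positive density of primes in a union of arithmetic progressions) is entirely standard, so the real work is organizing the elementary reciprocity statements cleanly rather than proving anything deep.
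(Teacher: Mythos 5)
Your proposal is correct and follows essentially the same route as the paper: part (a) is Proposition \ref{bigprop} directly, part (b) is the same case split on the sign of $a$ with the re-indexing $\sum_{x=1}^{p+a-1} = -\sum_{x=p+a}^{p-1}$, and part (c) rests on the same observation that the finitely many Legendre symbols $\bigl(\tfrac{x}{p}\bigr)$, $x \le |a|$, are periodic in $p$ modulo an explicit $N$ (the paper takes $N = 4\,\mathrm{lcm}\{2,\dots,|a|-1\}$ and cites Dirichlet, which is just your reciprocity/Chebotarev argument phrased more elementarily). Your remark that the displayed conclusion of part (c) should read $-\tfrac{1}{p}S_{\tau,p} = h_p^*$ is also a correct catch of a typo in the statement.
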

\begin{proof}
  Part (a) is precisely Proposition \ref{bigprop} applied to any prime
  of good reduction. For part (b), it is enough to bound the number of
  terms in $R_{a,b}$ independently of $p$. If $a > 0$, this is
  obvious; the sum has exactly $\{a \}$ terms, and $\{a \} \leq a$.
When $a = 0$ we trivially have $C_\tau = 1$. Now suppose $a < 0$. It
is no obstruction to assume $p > |a|$, and so $\{a \} = p + a$. We have
\begin{equation*}
  R_{a,b} = \delta_{-b} - \sum_{x=1}^{ \{a \} - 1 } \bigl( \tfrac{x}{p} \bigr) 
  = \delta_{-b} - \sum_{x=1}^p \bigl( \tfrac{x}{p} \bigr) + \sum_{x=\{a\}}^p \bigl( \tfrac{x}{p} \bigr) = \delta_{-b} + \sum_{x = p+a}^p \bigl( \tfrac{x}{p} \bigr),
\end{equation*}
and so $|R_{a,b}| \leq |a| + 2$.

For part (c), suppose $p_0 > |a|$ is a prime for which $R_{a,b}(p_0) =
0$. So $p = p_0$ is a solution to:
\begin{equation}\label{eq:partc_sum}
\delta_{-b} = \sum_{x=1}^{ \{a\}-1} \bigl( \tfrac{x}{p} \bigr).
\end{equation}
Now, among $p > |a|$, the individual terms of the sum
\eqref{eq:partc_sum} never vanish, and so the sum is periodic as a
function of $p$, with respect to some sufficiently large modulus,
e.g., $N = 4G$, where $G$ is the least common multiple of $\{2, 3,
\dots, |a| - 1 \}$. Thus, every prime in the sequence $\{p_0 + kN\}$
also satisfies $R_{a,b}(p) = 0$.
\end{proof}
\noindent The analogous result for the dual isogeny $\hat{\tau}$ is
proved in the appendix.
\begin{Remark}
  It is natural to ask what happens when the hypothesis in part (c)
  does not hold. It is possible that the error term is zero for only
  finitely many $p$. For example, when $(a,b) = (9,-1)$, the error
  term vanishes only for $p = 7$. It is even possible that the error
  term is never zero (e.g., Example 2 below). Indeed, note that by the
  periodicity modulo $N$, the error term is never zero if it is
  non-zero for all $p < N$. 
\end{Remark}

\begin{Remark}
  For fixed $a$ and varying $p$, it is not hard to argue that the
  uniform bounds constructed in the proof of Theorem \ref{thm:main} are
  the best possible. However, for a \emph{particular} value of $p$,
  better bounds certainly exist. For example, if $a = O(\log p)$, then
  the estimates of P\'{o}lya-Vinogradov (and later improvements by
  Burgess) may offer substantial improvement. For details, see for
  example \cite[\S9.4]{Montgomery:2007}.
\end{Remark}

\subsection*{Examples}

% Since a small value of $a$ provides tight control on the error term
% $R_{a,b}$, we expect $h_p^*$ to be computed exactly (i.e.,
% $R_{a,b}=0$) for a large portion of primes in such cases. In the
% extreme case $a = 0$, we have $R_{0,b} = \delta_{-b}$, which vanishes
% if and only if $-b$ is not a square modulo $p$. The example from the
% introduction is only slightly less trivial (this proves Proposition
% \ref{secondex}):

We consider a few examples for illustration. In all cases, the
selection of values $(a,b)$ determines elliptic curves $E_1$, $E_2$ by
\eqref{eq:E1E2} and an isogeny $\tau$ by \eqref{eq:tau_formulas}.

\begin{Exam}\label{ex:intro2}
Set $(a,b) = (2,-1)$. Then we have $R_{a,b}(p) = 0$ for all
primes $p>3$.  Hence, $-\frac{S_\tau}{p}=h_p^*$ for all such $p$. 
\end{Exam}
\begin{Exam}
Set $(a,b) = (3,-1)$. We find $R_{a,b} = -\bigl(
\tfrac{2}{p} \bigr)$, which is non-zero for all $p > 3$.
\end{Exam}

% Consider the elliptic curves 
% \begin{equation}
% \begin{split}
% E_1/\Q \colon \qquad y^2 & =x^3 + 3x^2 - \phantom{13}x, \\
% E_2/\Q \colon \qquad y^2 & = x^3 - 6x^2 + 13x,
% \end{split}
% \end{equation}
% i.e., $a=3$ and $b=-1$. Then 
% \begin{equation}
% R_{a,b}=\delta_1-\sum_{x=1}^2 \bigl( \tfrac{x}{p} \bigr) = -\bigl( \tfrac{2}{p} \bigr),
% \end{equation}
% which provides a non-zero error term for all primes $p>3$.  We obtain, however, that $-\frac{S_\tau}{p}=h_p^*-\bigl(\tfrac{2}{p}\bigr)$ for all such primes.

\begin{Exam}
Set $(a,b) = (7,2)$. Then, for $p\neq 5$ ($R_{a,b}(5) \neq 0$ by a separate calculation), 
\begin{equation*}
R_{a,b} = \delta_{-2} - \sum_{x=1}^6 \bigl( \tfrac{x}{p} \bigr) = 2 +
\delta_{-2} + \bigl( \tfrac{2}{p} \bigr) + \bigl( \tfrac{3}{p} \bigr)
+ \bigl( \tfrac{5}{p} \bigr) + \bigl( \tfrac{6}{p} \bigr).
\end{equation*}
We wish to decide when this sum vanishes.  By a parity argument, we
must have $\delta_{-2}=0$, and of the four remaining Legendre symbols,
three must evaluate to $-1$, and one to $+1$.  Further, we cannot have
$\bigl( \tfrac{5}{p} \bigr)=1$. Otherwise, $R_{a,b} = 0$ implies
$\bigl( \tfrac{2}{p} \bigr) = \bigl( \tfrac{3}{p} \bigr) = \bigl(
\tfrac{6}{p} \bigr)=-1$, contradicting the multiplicativity of the
Legendre symbol. The three choices for which of the other Legendre
symbols is to be $-1$ each lead to congruence conditions easily
determined via quadratic reciprocity.  Namely, we find:
\begin{equation*}
-\frac{S_\tau}{p}=h_p^* \quad\iff\quad p\equiv 17,43,67,83,107,\text{ or }113\mod 120.
\end{equation*}
\end{Exam}

\section{Complex Multiplication}\label{section:CM}

Having computed the character sums associated to $2$-isogenies given
in the specific form \eqref{eq:tau_formulas}, we turn to the analogous
computation for other $2$-isogenies. The principal technical lemma is
that the character sums are unaffected by applying a change of
coordinates $x \mapsto x - \varepsilon$ to the curve.  This will allow
us to change the codomain of an arbitrary 2-isogeny to one of the form
in \eqref{eq:tau_formulas}, and allow us to compute the corresponding
character sums.  As a primary application, the elliptic curves with
complex multiplication by $\sqrt{-1}$, $\sqrt{-2}$, or $\sqrt{-7}$
possess an endomorphism of degree $2$, and in this section we will
compute the associated weighted character sum. In particular, we
deduce Proposition \ref{-2ex}.

Suppose that $\phi \colon E_1' \to E_2'$ is a degree $2$-isogeny
defined over $\F_p$ with $p > 3$. Then there exist $\F_p$-isomorphisms
$\alpha_i \colon E_i' \to E_i$ and an isogeny $\tau$ of the form
\eqref{eq:tau_formulas} such that the left square of the following
diagram commutes:
\begin{equation}\label{eq:phi_tau_diagram}
\xymatrix{
E_1' \ar[d]_{\alpha_1} \ar[r]^{\phi} & E'_2 \ar[d]^{\alpha_2} \ar[r]^{\chi_\phi}& \bmu_2 \\
E_1 \ar[r]_{\tau} & E_2 \ar[r]_{\chi_\tau} & \bmu_2 \ar@{=}[u]_{\mathrm{id}}}
\end{equation}
Here, $\chi_\phi$ and $\chi_\tau$ are the characters attached $\phi$
and $\tau$, respectively.
\begin{Lemma}\label{lemma:phi_tau_sums}
  The right square of the above diagram commutes, i.e., $\chi_\phi(P')
  = \chi_\tau \bigl( \alpha_2(P') \bigr)$ for any $P' \in E_2'(\F_p)$.
\end{Lemma}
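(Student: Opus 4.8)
The plan is to unwind the definitions of $\chi_\phi$ and $\chi_\tau$ and reduce the claim to the single fact that an isomorphism of elliptic curves over $\F_p$ carries $\F_p$-rational points to $\F_p$-rational points. Recall that, by definition, $\chi_\phi(P') = +1$ precisely when $P' \in \phi(E_1'(\F_p))$, and similarly $\chi_\tau(Q) = +1$ precisely when $Q \in \tau(E_1(\F_p))$. So the content of the right square commuting is the equivalence
\begin{equation*}
P' \in \phi\bigl(E_1'(\F_p)\bigr) \quad\Longleftrightarrow\quad \alpha_2(P') \in \tau\bigl(E_1(\F_p)\bigr).
\end{equation*}
First I would invoke commutativity of the \emph{left} square, which is given: $\alpha_2 \circ \phi = \tau \circ \alpha_1$ as morphisms $E_1' \to E_2$.

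Next I would argue both implications. For the forward direction, suppose $P' = \phi(R')$ for some $R' \in E_1'(\F_p)$. Then $\alpha_2(P') = \alpha_2(\phi(R')) = \tau(\alpha_1(R'))$. Since $\alpha_1$ is defined over $\F_p$ and $R' \in E_1'(\F_p)$, the point $\alpha_1(R')$ lies in $E_1(\F_p)$; hence $\alpha_2(P') \in \tau(E_1(\F_p))$, i.e., $\chi_\tau(\alpha_2(P')) = +1$. For the reverse direction, suppose $\alpha_2(P') = \tau(S)$ for some $S \in E_1(\F_p)$. Because $\alpha_1$ is an isomorphism defined over $\F_p$, it has an inverse $\alpha_1^{-1}$ defined over $\F_p$, so $R' := \alpha_1^{-1}(S) \in E_1'(\F_p)$, and $\alpha_1(R') = S$. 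Then $\alpha_2(\phi(R')) = \tau(\alpha_1(R')) = \tau(S) = \alpha_2(P')$, and since $\alpha_2$ is injective this gives $\phi(R') = P'$, so $\chi_\phi(P') = +1$. Combining the two implications, $\chi_\phi(P') = +1 \iff \chi_\tau(\alpha_2(P')) = +1$; since both characters are $\bmu_2$-valued, they therefore agree at every $P'$.

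There is no serious obstacle here: the argument is a formal diagram chase once one observes that the characters are defined purely in terms of membership in the image of an $\F_p$-rational map, and that the $\alpha_i$ and their inverses preserve $\F_p$-rationality. The only point requiring a word of care is the backward implication, where one must produce an $\F_p$-rational preimage under $\phi$; this is exactly where the fact that $\alpha_1$ (and hence $\alpha_1^{-1}$) is defined over $\F_p$ — not merely over $\overline{\F_p}$ — is used, so I would state that explicitly rather than gloss over it.
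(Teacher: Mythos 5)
Your argument is correct and is essentially the paper's own proof: the authors also perform this diagram chase, proving the forward implication via $\alpha_2(P') = \tau(\alpha_1(Q'))$ and noting the argument reverses. You simply write out the reverse direction (using that $\alpha_1^{-1}$ is defined over $\F_p$) explicitly where the paper leaves it implicit.
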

\begin{proof}
This is a simple diagram chase. If $\chi_\phi(P') = 1$, then there
exists a point $Q' \in E_1'(\F_p)$ such that $\phi(Q') = P'$. Hence,
$\alpha_2(P') = \tau(\alpha_1(Q))$, and so $\chi_\tau(\alpha_2(P')) =
1$ also. This argument is easily reversed, and the result follows.
\end{proof}
As a consequence, we demonstrate the weighted character sum
\begin{equation*}
S_\phi := \psum{P \in E_2'(\F_p)} \bigl\{ x(P) - \xi \bigr\} \chi_\phi(P)
\end{equation*}
equals the sum $S_\tau$ of \eqref{eq:Stau_def}, provided $\alpha_2$ is
of a particular form and $\xi$ is chosen appropriately. Let $E_2'$ be
the elliptic curve
\begin{equation}\label{eq:E2prime}
y^2 = (x - \varepsilon)(x^2 - 2\mu x + \nu), \qquad \varepsilon, \mu, \nu \in \F_p.
\end{equation}
defined over $\F_p$. Let $\phi$ be an isogeny such that the kernel of
the dual isogeny $\hat{\phi}$ is generated by $(\varepsilon, 0) \in
E_2'(\F_p)$. Set $(a,b) = \bigl( \mu - \varepsilon, \frac{1}{4}(\mu^2
- \nu)\bigr)$, and let $E_1, E_2, \tau$ be as in
\eqref{eq:tau_formulas}. Then the map
\begin{equation}\label{eq:alpha2}
\alpha_2(x,y) := (x - \varepsilon, y)
\end{equation}
has the property that $\hat{\tau} \circ \alpha_2$ and $\hat{\phi}$
have the same kernel. This guarantees the existence of a unique
$\alpha_1$ satisfying $\hat{\tau} \circ \alpha_2 = \alpha_1 \circ
\hat{\phi}$, and in fact, $\alpha_1$ completes the diagram
\eqref{eq:phi_tau_diagram}. Now set $\xi = \varepsilon + a$, so that
\begin{equation*}
S_\phi = \psum{P \in E_2'(\F_p)} \bigl\{ x(P) - \varepsilon - a \bigr\}
\chi_\phi(P).
\end{equation*}
\begin{Lemma}
$S_\phi = S_\tau$.
\end{Lemma}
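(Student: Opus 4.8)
The plan is to combine the previous lemma (that the right square of diagram \eqref{eq:phi_tau_diagram} commutes) with the change-of-variables $\alpha_2(x,y) = (x-\varepsilon, y)$ to identify the two sums term-by-term. First I would recall that $\alpha_2 \colon E_2'(\F_p) \to E_2(\F_p)$ is a bijection, since it is an isomorphism of elliptic curves over $\F_p$; hence the substitution $P' \mapsto P := \alpha_2(P')$ reindexes the primed sum defining $S_\phi$ over $E_2'(\F_p)$ into a primed sum over $E_2(\F_p)$, and the point $P' = \infty$ is sent to $P = \infty$ so the exclusion of the point at infinity is respected on both sides.

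Next I would track the weight. By the explicit formula \eqref{eq:alpha2}, if $P = \alpha_2(P')$ then $x(P) = x(P') - \varepsilon$, so that $x(P') - \xi = x(P') - \varepsilon - a = x(P) - a$ as elements of $\F_p$. Since the brace notation $\{\cdot\}$ depends only on the class in $\F_p$ (as recorded in the opening Remark), we get the equality of integers $\{x(P') - \xi\} = \{x(P) - a\}$. For the character factor, Lemma \ref{lemma:phi_tau_sums} gives $\chi_\phi(P') = \chi_\tau(\alpha_2(P')) = \chi_\tau(P)$. Putting the two together, each summand of $S_\phi$ indexed by $P'$ equals the summand of $S_\tau$ indexed by $P = \alpha_2(P')$, and summing over the bijection yields $S_\phi = S_\tau$.

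I do not expect any serious obstacle here: the only points requiring care are (i) confirming that $\alpha_2$ really is defined over $\F_p$ and carries $E_2'$ isomorphically onto $E_2$ — this is where the choice $(a,b) = (\mu - \varepsilon, \tfrac14(\mu^2 - \nu))$ is used, since it makes the cubic $x^3 - 2ax^2 + (a^2 - 4b)x$ equal to $(x - \varepsilon)(x^2 - 2\mu x + \nu)$ after the shift $x \mapsto x + \varepsilon$, so one should verify that polynomial identity; and (ii) making sure the $\{\cdot\}$-lift is genuinely a function of the residue class, which is exactly the convention fixed in the opening Remark and reiterated before \eqref{chisums}. Once those are in hand the proof is a one-line reindexing.
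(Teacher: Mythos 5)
Your proposal is correct and follows the same route as the paper: reindex the primed sum via the bijection $\alpha_2$, use $x(\alpha_2(P')) = x(P') - \varepsilon$ to match the weights $\{x(P') - \varepsilon - a\} = \{x(P) - a\}$, and invoke Lemma \ref{lemma:phi_tau_sums} to match the characters. The paper's proof is exactly this one-line substitution, so no further comment is needed.
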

\begin{proof}
Let $p$ be a prime of good reduction. Applying the previous lemma, we have
\begin{equation}\label{eq:translate}
\begin{split}
S_\phi & = \psum{P \in E_2'(\F_p)} \{x(P) - \varepsilon - a \}\, \chi_\phi(P)  \\
& = \psum{P \in E_2(\F_p)} \left\{ x \bigl( \alpha_2^{-1}(P) \bigr) -
  \varepsilon - a \right\} \chi_\phi \bigl( \alpha_2^{-1}(P)
\bigr) \\
& = \psum{P \in E_2(\F_p)} \{x(P) - a \}\, \chi_\tau(P) = S_\tau.
\end{split}
\end{equation}
\end{proof}
We now address CM endomorphisms. For each endomorphism $\phi$, we
select isomorphisms $\alpha_i$ such that $\alpha_2 \circ \phi = \tau
\circ \alpha_1$, as in diagram \eqref{eq:phi_tau_diagram}, and such
that $E_2'$ and $\alpha_2$ have the forms \eqref{eq:E2prime} and
\eqref{eq:alpha2}, respectively. Evaluation of $S_\phi$ now follows
from the previous lemma.
\subsection{CM by $-1$}
Consider first the elliptic curve $E/\Q \colon y^2 = x^3 + x$, which
has complex multiplication by $\Z[\sqrt{-1}]$, and possesses the
degree 2 endomorphism
\begin{equation}\label{eq:CMneg1_endo}
\phi \colon E \longrightarrow E, \qquad \phi(x,y) = \left( \frac{x^2 +
    1}{2ix}, \frac{ y(x^2-1) }{ (2i-2)x^2 } \right).
\end{equation}
Let $E'/\Q$ be the elliptic curve with Weierstrass equation $y^2 = x^3 -
\frac{1}{4}x$, and let $\tau \colon E' \to E$ be the isogeny
corresponding to $(a,b) = (0, -\frac{1}{4})$. If $\alpha_2$ is the
identity and $\alpha_1$ is the isomorphism
\begin{equation*}
\alpha_1(x,y) = (u^{-2}x, u^{-3}y), \qquad u = -(1+i),
\end{equation*}
then $\alpha_2 \circ \phi = \tau \circ \alpha_1$. Suppose $p > 3$ is a
prime of good reduction which splits in $\Q(i)$. Each of the morphisms  
in the above diagram has a well-defined reduction mod $p$, and the
diagram still commutes after reduction. Applying Lemma
\ref{lemma:phi_tau_sums} and Theorem \ref{thm:main}, we see
\begin{equation*}
S_\phi = S_\tau = -ph_p^* -pR_{0, -\frac{1}{4}} = -p.
\end{equation*}
This proves:
\begin{Cor}\label{cor:CMneg1}
Let $p > 3$ be a prime, and suppose $p \equiv 1 \pmod{4}$. Then
\begin{equation*}
-\frac{1}{p} \psum{P \in E(\F_p)} \{ x(P) \} \chi_\phi(P) = 1.
\end{equation*}
\end{Cor}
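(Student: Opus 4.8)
The goal is Corollary~\ref{cor:CMneg1}, which is the assertion that for a prime $p \equiv 1 \pmod 4$ (necessarily a prime of good reduction for $E \colon y^2 = x^3+x$, apart from the small primes already excluded), one has $-\tfrac1p \psum{P \in E(\F_p)} \{x(P)\}\chi_\phi(P) = 1$. My plan is simply to assemble the machinery already in place and verify the single numerical specialization.

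First I would set up the data exactly as in the preceding paragraph: take $E'/\Q \colon y^2 = x^3 - \tfrac14 x$, which is the curve attached to $(a,b) = (0,-\tfrac14)$ via \eqref{eq:E1E2}, and let $\tau \colon E' \to E$ be the isogeny \eqref{eq:tau_formulas} for these parameters; note $r = a^2 - 4b = 1$. I would check that the stated $\alpha_1(x,y) = (u^{-2}x,u^{-3}y)$ with $u = -(1+i)$ is an isomorphism $E' \to E$ (i.e.\ $u^4 = -4$, so the coefficient $b = -\tfrac14$ scales to $1$ and the quadratic coefficient stays $0$), and that $\alpha_2 = \mathrm{id}$ together with this $\alpha_1$ makes the square $\alpha_2 \circ \phi = \tau \circ \alpha_1$ commute — this is the hypothesis needed to invoke Lemma~\ref{lemma:phi_tau_sums} and the translation lemma (here $\varepsilon = 0$, so $\alpha_2$ is indeed the identity and $\xi = \varepsilon + a = 0$, matching the weight $\{x(P)\}$ in the statement). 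For $p \equiv 1 \pmod 4$ the prime splits in $\Q(i) = \Q(\sqrt{-1})$, so the endomorphism $\phi$ and all the isomorphisms reduce to honest $\F_p$-rational maps and the diagram still commutes mod $p$.

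Next I would apply Lemma~\ref{lemma:phi_tau_sums} (and the subsequent lemma $S_\phi = S_\tau$) to identify the sum in the Corollary with $S_{\tau,p}$ for the curve $E'$ and parameters $(a,b) = (0,-\tfrac14)$, and then quote Proposition~\ref{bigprop} / Theorem~\ref{thm:main}(b): $-\tfrac1p S_{\tau,p} = h_p^* + R_{a,b}$ with $R_{a,b} = \delta_{-b} - \sum_{x=1}^{\{a\}-1}(\tfrac xp)$. With $a = 0$ the sum $\sum_{x=1}^{\{a\}-1}(\tfrac xp)$ is empty, so $R_{0,-1/4} = \delta_{-b} = \delta_{1/4}$. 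Since $\tfrac14$ is a square in $\F_p^\times$, $\delta_{1/4} = 1$, hence $-\tfrac1p S_\phi = h_p^* + 1$. Finally, because $p \equiv 1 \pmod 4$ we have $h_p^* = 0$ by the definition of $h_p^*$, giving $-\tfrac1p S_\phi = 1$ as claimed; equivalently $S_\phi = S_\tau = -ph_p^* - pR_{0,-1/4} = -p$.

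The only genuine content beyond bookkeeping is checking that the specified $(\alpha_1,\alpha_2)$ really do fit into diagram~\eqref{eq:phi_tau_diagram} for this particular $\phi$, i.e.\ that $\alpha_2 \circ \phi = \tau \circ \alpha_1$ as rational maps — a direct substitution using $u^4 = -4$ and the explicit formulas \eqref{eq:CMneg1_endo} and \eqref{eq:tau_formulas}. I expect this to be the main (and only mildly tedious) obstacle; everything else is a citation of Lemma~\ref{lemma:phi_tau_sums}, the translation lemma, Theorem~\ref{thm:main}(b), and the trivial evaluations $\delta_{1/4} = 1$ and $h_p^* = 0$ for $p \equiv 1 \pmod 4$. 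One should also remark that $p \nmid b$-type nondegeneracy and good reduction hold for all $p > 3$ here (the curve $y^2 = x^3+x$ has good reduction away from $2$), so no further primes need to be excluded beyond those forced by $p \equiv 1 \pmod 4$ and $p > 3$.
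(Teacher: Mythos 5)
Your proposal is correct and follows essentially the same route as the paper: identify $S_\phi$ with $S_\tau$ for $(a,b)=(0,-\tfrac14)$ via Lemma~\ref{lemma:phi_tau_sums} and the translation lemma, then apply Proposition~\ref{bigprop} with $R_{0,-1/4}=\delta_{1/4}=1$ and $h_p^*=0$ for $p\equiv 1\pmod 4$. The only difference is that you spell out the verification of $\alpha_2\circ\phi=\tau\circ\alpha_1$ and the good-reduction bookkeeping, which the paper asserts without detail.
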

\subsection{CM by $-2$}
Next, we compute the character sum associated to a degree $2$
endomorphism on an elliptic curve with complex multiplication by
$\Z[\sqrt{-2}]$. One such elliptic curve is  
\begin{equation*}
E/\Q \colon y^2 = (x+2)(x^2-2).
\end{equation*}
One degree $2$ endomorphism on $E$ is:
\begin{equation}\label{eq:tau_CMneg2}
\phi(x,y) = \left( \frac{ (x+2)^2 + 2}{ -2(x+2) }, \frac{ y \bigl( (x+2)^2
    -2 \bigr) }{2\sqrt{-2} (x+2)^2} \right).
\end{equation}
(This may be derived from the formula given in
\cite[pg. 111]{Silverman:AEC2}, but note that we are using a different
Weierstrass equation.) Let $p > 3$ be a prime that splits in
$\Q(\sqrt{-2})$, so that $\bigl( \tfrac{-2}{p}
\bigr)=1$. Equivalently, $p$ is congruent to $1$ or $3 \pmod{8}$. For
such primes, there is a well-defined reduction for $\phi$ over $\F_p$,
which we will also denote $\phi$. We now prove Proposition \ref{-2ex}. Set
\begin{equation*}
S_\phi := \psum{P \in E(\F_p)} \{ x(P) \} \chi_\phi(P).
\end{equation*}
\begin{Cor}
  For any prime $p > 3$ such that $p \equiv 1$ or $3 \pmod{8}$,
  $-\frac{1}{p} S_\phi = h_p^*$.
\end{Cor}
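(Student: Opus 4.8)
The plan is to reduce the corollary to Theorem~\ref{thm:main} via the translation machinery developed just above, exactly as in the CM-by-$-1$ case. First I would put $E/\Q \colon y^2 = (x+2)(x^2-2)$ in the form \eqref{eq:E2prime}, which is immediate: here $\varepsilon = -2$, $\mu = 0$, $\nu = -2$. Following the recipe, this dictates the choice $(a,b) = \bigl(\mu - \varepsilon, \tfrac14(\mu^2 - \nu)\bigr) = \bigl(2, \tfrac12\bigr)$, and $E_1, E_2, \tau$ are then the curves and isogeny of \eqref{eq:E1E2}--\eqref{eq:tau_formulas} for this pair. The map $\alpha_2(x,y) = (x + 2, y)$ is the translation \eqref{eq:alpha2}, with $\xi = \varepsilon + a = 0$, which matches the weight $\{x(P)\}$ appearing in $S_\phi$.

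The key step is to verify the hypotheses of the two lemmas preceding the CM subsections: namely that $\phi$ from \eqref{eq:tau_CMneg2} does fit into a commuting diagram \eqref{eq:phi_tau_diagram} with $\tau$ and this $\alpha_2$. One checks that the kernel of $\hat\phi$ is generated by $(\varepsilon, 0) = (-2, 0) \in E(\F_p)$ (the endomorphism $\phi$ is visibly the one whose dual has this $2$-torsion point in its kernel, from the shape of the $y$-coordinate of \eqref{eq:tau_CMneg2}); then $\hat\tau \circ \alpha_2$ and $\hat\phi$ have the same kernel, so there is a unique $\alpha_1$ completing \eqref{eq:phi_tau_diagram}, and one exhibits it explicitly as a scaling $\alpha_1(x,y) = (u^{-2}x + c, u^{-3}y)$ for appropriate $u, c$ (with $u$ involving $\sqrt{-2}$, so defined over $\F_p$ precisely when $\bigl(\tfrac{-2}{p}\bigr) = 1$). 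All morphisms then have good reduction mod $p$ for $p \equiv 1, 3 \pmod 8$, and the reduced diagram commutes.

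Granting this, Lemma~\ref{lemma:phi_tau_sums} gives $\chi_\phi(P') = \chi_\tau(\alpha_2(P'))$, and the lemma asserting $S_\phi = S_\tau$ (via the substitution $P \mapsto \alpha_2^{-1}(P)$ as in \eqref{eq:translate}) yields $S_\phi = S_\tau = S_{\tau,p}$. It remains to compute $R_{a,b}$ for $(a,b) = (2, \tfrac12)$ and show it vanishes: by Proposition~\ref{bigprop}, $R_{2,1/2} = \delta_{-1/2} - \sum_{x=1}^{\{2\}-1}\bigl(\tfrac{x}{p}\bigr) = \delta_{-1/2} - \bigl(\tfrac{1}{p}\bigr) = \delta_{-1/2} - 1$. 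Since $p \equiv 1, 3 \pmod 8$ forces $\bigl(\tfrac{-2}{p}\bigr) = 1$, hence $\bigl(\tfrac{-1/2}{p}\bigr) = \bigl(\tfrac{-2}{p}\bigr) = 1$ (as $-1/2 \equiv -2 \cdot (2^{-1})^2$), so $\delta_{-1/2} = 1$ and $R_{a,b} = 0$. Therefore $-\tfrac{1}{p}S_\phi = -\tfrac1p S_{\tau,p} = h_p^* + R_{a,b} = h_p^*$.

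The main obstacle is the bookkeeping in the middle step: confirming that the specific $\phi$ written in \eqref{eq:tau_CMneg2} is genuinely the normalized isogeny compatible with the chosen $(a,b)$ and $\alpha_2$ (rather than its precomposition with $[-1]$ or a twist), and pinning down $\alpha_1$ explicitly so that \eqref{eq:phi_tau_diagram} commutes on the nose; the arithmetic of $R_{a,b}$ is then routine, and the interesting point is simply that the good-reduction condition $\bigl(\tfrac{-2}{p}\bigr)=1$ is exactly what makes $\delta_{-b} = 1$, so the error term disappears.
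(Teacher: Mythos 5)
Your proposal is correct and follows exactly the paper's route: choose $(a,b)=(2,\tfrac12)$ via the translation $\alpha_2(x,y)=(x+2,y)$, invoke Lemma~\ref{lemma:phi_tau_sums} and the $S_\phi=S_\tau$ lemma, and then observe that $R_{2,1/2}=\delta_{-1/2}-1=0$ precisely because $\bigl(\tfrac{-2}{p}\bigr)=1$ when $p\equiv 1,3\pmod 8$. The paper's own proof is just a terser version of this; your extra care in pinning down $\alpha_1$ and verifying the kernel of $\hat\phi$ fills in details the paper declares ``straightforward.''
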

\begin{proof}
Let $E_1/\Q$ and $E_2/\Q$ denote the elliptic curves corresponding to
the choice $(a,b) = (2,\frac{1}{2})$ in \eqref{eq:tau_formulas}; let
$\tau$ denote the corresponding $2$-isogeny. It is straightforward to
find isomorphisms $\alpha_i$ with the requisite properties such that
$\tau \circ \alpha_1 = \alpha_2 \circ \phi$. Hence, $S_\phi =
S_\tau$. By Proposition \ref{bigprop}, $-\frac{1}{p}S_\tau = h_p^* +
R_{2,\frac{1}{2}} = h_p^*$, as $R_{2,\frac{1}{2}} = 0$ if $p \equiv 1, 3 \pmod{8}$.
\end{proof}

\subsection{CM by $-7$}
Finally, we compute the character sum associated to a degree $2$
endomorphism on a curve with complex multiplication by
$\Z[\sqrt{-7}]$. We take the elliptic curve
\begin{equation*}
E/\Q \colon y^2 = (x+7)(x^2-7x+14).
\end{equation*}
Let $p > 3$ be a prime which splits in $\Q(\sqrt{-7})$, so $\bigl(
\tfrac{-7}{p} \bigr)=1$. Set $\beta = \frac{1+\sqrt{-7}}{2}$.  A
degree $2$ endomorphism on $E$ is given explicitly by
(\cite[pg. 111]{Silverman:AEC2}):
\begin{equation}\label{eq:tau_CMneg7}
  \phi(x,y) = \left( \beta^{-2} \left( x - \frac{7(1 - \beta^4)}{x + \beta^2 - 2} \right),\beta^{-3} y \left(1 + \frac{7(1 - \beta)^4}{(x + \beta^2 - 2)^2} \right) \right).
\end{equation}
Let $E_1$ and $E_2$ be the elliptic curves
\begin{equation*}
\begin{split}
E_1/\Q(\sqrt{-7}) \colon y^2 & = x^3 + \phantom{2}ax^2 + bx, \\
E_2/\Q(\sqrt{-7}) \colon y^2 & = x^3 - 2ax^2 + rx,
\end{split}
\end{equation*}
where
\begin{equation*}
a = \tfrac{3}{2} \bigl(\beta - 4 \bigr), \qquad b = \tfrac{7}{16} \bigl( 3\beta + 14 \bigr), \qquad r = a^2 - 4b.
\end{equation*}
The curves $E$, $E_1$, and $E_2$ are all isomorphic over
$\Q(\sqrt{-7})$. Let $\alpha_1$ and $\alpha_2$ be the isomorphisms
\begin{equation*}
\begin{split}
  \alpha_1 \colon E \rightarrow E_1, \qquad (x,y) & \mapsto \bigl( (\beta - 1)^2 x + \beta + 3, (\beta - 1)^3 y \bigr), \\
  \alpha_2 \colon E \rightarrow E_2, \qquad (x,y) & \mapsto (x + 4 -
  \beta, y).
\end{split}
\end{equation*}
As before, we have $\tau \circ \alpha_1 = \alpha_2 \circ \phi$, and
each morphism reduces to an $\F_p$-rational morphism on the reduced
curves, as $p$ splits in $\Q(\sqrt{-7})$. If we define
\begin{equation*}
S_\phi := \psum{P \in E(\F_p)} \{ x(P) \} \chi_\phi(P),
\end{equation*}
then $S_\phi = S_\tau$, and we find $-\frac{1}{p} S_\phi = h_p^* +
R_{a,b}$.

Unlike all previous examples, however, we do not have a uniform bound
for the error. Our bound is always in terms of $\{a \}$, but in this
situation $a \not\in \Z$. Hence, as a function of $p$, $\{a \}$ is not
eventually constant, and the previous results bounding $R_{a,b}$ do
not apply.

\section*{Appendix: A proof of a dual isogeny calculation}
Let $\tau$ and $\hat{\tau}$ be the isogenies defined in
\eqref{eq:tau_formulas}, with $a,b\in \F_p$ for $p>3$. Recall that we
set $r = a^2 - 4b$. Let $\eta=\{\frac{a}{2}\}$, and define
\begin{equation*}
\hat{R}_{a,b} := \delta_{-r} + \sum_{x=1}^{\eta} \bigl( \tfrac{-x}{p}
\bigr).
\end{equation*}
We wish to evaluate
\begin{equation*}
S_{\hat{\tau}} := \psum{P \in E_1(\F_p)} \bigl\{ x(P) + \tfrac{a}{2}
\bigr\}\, \chi_{\hat{\tau}}(P).
\end{equation*}
\begin{Theo}
For any odd prime $p$ of good reduction,
\begin{equation}\label{eq:Stauhat}
-\frac{1}{p} S_{\hat{\tau}} = h_p^* + \hat{R}_{a,b}.
\end{equation}
\end{Theo}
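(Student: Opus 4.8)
The plan is to repeat the argument of Proposition~\ref{bigprop} verbatim in structure, but with the \emph{dual} data: use Proposition~\ref{prop:chi_psi_dual} in place of Proposition~\ref{prop:chi_psi}, and the shifted weight $x(P)+\tfrac{a}{2}$ in place of $x(P)-a$. I would \emph{not} try to deduce this from Proposition~\ref{bigprop} by an isomorphism: although $\hat{\tau}$ is, up to isomorphism, an isogeny of the form \eqref{eq:tau_formulas} (for the parameters $(-2a,\;a^2-4b)$), putting it in that shape requires not only a translation but also a scaling $x\mapsto 4x$ of the $x$-coordinate, and scaling does not preserve weighted character sums; so a direct computation is cleanest.

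First I would apply Proposition~\ref{prop:chi_psi_dual}, splitting off the single point $T_1=(0,0)\in E_1(\F_p)$ (the generator of $E_1[\tau]$) from the primed sum. Since $x(T_1)=0$, the weight there is $\{\tfrac{a}{2}\}=\eta$ and the character value is $\bigl(\tfrac{b}{p}\bigr)$, while for every other point $P=(x,y)$ one has $\chi_{\hat{\tau}}(P)=\bigl(\tfrac{x}{p}\bigr)$. Grouping the remaining points by $x$-coordinate (there are $1+\bigl(\tfrac{f_1(x)}{p}\bigr)$ of them for each $x\in\F_p^\times$) and using $f_1(x)=x(x^2+ax+b)$ to rewrite $\bigl(\tfrac{x}{p}\bigr)\bigl(1+\bigl(\tfrac{f_1(x)}{p}\bigr)\bigr)=\bigl(\tfrac{x}{p}\bigr)+\bigl(\tfrac{x^2+ax+b}{p}\bigr)$ for $x\neq 0$, I would observe that the leftover term $\eta\bigl(\tfrac{b}{p}\bigr)$ is exactly the $x=0$ contribution to the quadratic sum, so that it gets absorbed and
\begin{equation*}
S_{\hat{\tau}}=\sum_{x=1}^{p-1}\bigl\{x+\tfrac{a}{2}\bigr\}\bigl(\tfrac{x}{p}\bigr)+\sum_{x=0}^{p-1}\bigl\{x+\tfrac{a}{2}\bigr\}\bigl(\tfrac{x^2+ax+b}{p}\bigr).
\end{equation*}
Then I would evaluate the two sums in turn. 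For the first, I split the range $1\le x\le p-1$ according to whether $x+\eta<p$; the wrap-around contributes a factor $-p\sum_{x=p-\eta}^{p-1}\bigl(\tfrac{x}{p}\bigr)$, and after invoking $\sum_{x=1}^{p-1}\bigl(\tfrac{x}{p}\bigr)=0$, Dirichlet's formula (Theorem~\ref{ACNF}) $\sum_{x=1}^{p-1}x\bigl(\tfrac{x}{p}\bigr)=-ph_p^*$, and the substitution $x\mapsto p-x$, I obtain $-ph_p^*-p\sum_{x=1}^{\eta}\bigl(\tfrac{-x}{p}\bigr)$. For the second, I complete the square $x^2+ax+b=(x+\tfrac{a}{2})^2-\tfrac{r}{4}$, substitute $u=x+\tfrac{a}{2}$ (the weight becomes $\{u\}$ and the $u=0$ term vanishes), and apply Lemma~\ref{lemma:quadcalc} with $k=-\tfrac{r}{4}$ to get $-p\,\delta_{-r/4}=-p\,\delta_{-r}$, the last equality because $\bigl(\tfrac{1/4}{p}\bigr)=1$. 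Combining the two evaluations yields $S_{\hat{\tau}}=-p\bigl(h_p^*+\hat{R}_{a,b}\bigr)$, which is \eqref{eq:Stauhat} and simultaneously exhibits the divisibility $p\mid S_{\hat{\tau}}$.

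The step I expect to be the main obstacle is the bookkeeping at the two special contributions, rather than anything conceptual. I would need to check carefully that the isolated $T_1$-term $\eta\bigl(\tfrac{b}{p}\bigr)$ really coincides with the $x=0$ term of the quadratic sum (so that \emph{both} special terms disappear into clean, full-range sums), and I would need to track the sign and endpoints in the translation of the linear sum: because the shift $+\tfrac{a}{2}$ is by the nonnegative residue $\eta$, the sum wraps at the \emph{top} of the range instead of the bottom, which is precisely what produces $\bigl(\tfrac{-x}{p}\bigr)$ and the $+$ sign in $\hat{R}_{a,b}$, in contrast with the $\bigl(\tfrac{x}{p}\bigr)$ and the $-$ sign in $R_{a,b}$. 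I would also need to record that $p\nmid r$ (so that Lemma~\ref{lemma:quadcalc} applies), which follows from good reduction since the three roots $0$ and the roots of $x^2+ax+b$ of $f_1$ must stay distinct mod $p$, forcing the discriminant $r$ of $x^2+ax+b$ to be nonzero.
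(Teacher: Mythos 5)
Your proposal is correct and follows essentially the same route as the paper's own proof: split off the $T_1$ term via Proposition \ref{prop:chi_psi_dual}, absorb $\eta\bigl(\tfrac{b}{p}\bigr)$ as the $x=0$ term of the quadratic sum, evaluate the linear sum by tracking the wrap-around at the top of the range (producing $-p\sum_{x=1}^{\eta}\bigl(\tfrac{-x}{p}\bigr)$), and handle the quadratic sum by completing the square and applying Lemma \ref{lemma:quadcalc} with $k=-\tfrac{r}{4}$. The additional checks you flag (that $p\nmid r$ by good reduction, and that $\delta_{-r/4}=\delta_{-r}$) are correct and fill in details the paper leaves implicit.
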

\begin{proof}
By Proposition \ref{prop:chi_psi_dual}, we first obtain:
\begin{equation*}
\begin{split}
S_{\hat{\tau}} & = \eta \bigl( \tfrac{b}{p} \bigr) +
\sum_{\substack{P\in E_1(\F_p) \\ P \neq T_1, \infty}} \bigl\{ x(P) + \tfrac{a}{2}
\bigr\} \chi_{\hat{\tau}}(P) \\
& = \eta \bigl( \tfrac{b}{p} \bigr) + \sum_{x=1}^{p-1} \bigl\{ x +
\tfrac{a}{2} \bigr\} \bigl( \tfrac{x}{p} \bigr) \bigl(1 + \bigl(
\tfrac{ x^3 + ax^2 + bx}{p} \bigr) \bigr) \\
& = \eta \bigl( \tfrac{b}{p} \bigr) + \sum_{x=1}^{p-1} \bigl\{x +
\tfrac{a}{2} \bigr\} \bigl( \tfrac{x}{p} \bigr) + \sum_{x=1}^{p-1}
\bigl\{ x + \tfrac{a}{2} \bigr\} \bigl( \tfrac{x^2 + ax + b}{p}
\bigr) \\
& = \sum_{x=1}^{p-1} \bigl\{ x + \tfrac{a}{2} \bigr\} \bigl(
\tfrac{x}{p} \bigr) + \sum_{x=0}^{p-1} \bigl\{ x + \tfrac{a}{2}
\bigr\} \bigl( \tfrac{x^2 + ax + b}{p} \bigr).
\end{split}
\end{equation*}
The first sum evaluates as follows:
\begin{equation*}
\begin{split}
\sum_{x=1}^{p-1} \bigl\{ x + \tfrac{a}{2} \bigr\} \bigl(
\tfrac{x}{p} \bigr) & = \sum_{x=1}^{p-1-\eta} (\{x\} +
\{\eta\}) \bigl( \tfrac{x}{p} \bigr) + \sum_{x=p-\eta}^{p-1}
(\{x\} + \{\eta\} - p) \bigl( \tfrac{x}{p} \bigr) \\
& = \sum_{x=1}^{p-1}  x\bigl( \tfrac{x}{p} \bigr) + \eta
\sum_{x=1}^{p-1} \bigl( \tfrac{x}{p} \bigr) - p \sum_{x=p-\eta}^{p-1}
\bigl( \tfrac{x}{p} \bigr) = -ph_p^* - p \sum_{x=1}^\eta \bigl( \tfrac{-x}{p} \bigr).
\end{split}
\end{equation*}
As for the second sum, we substitute $u=x+\eta$ and apply Lemma \ref{lemma:quadcalc} to get:
\begin{equation*}
\sum_{x=0}^{p-1} \bigl\{x + \tfrac{a}{2} \bigr\} \bigl( \tfrac{x^2
  + ax + b}{p} \bigr) = \sum_{u=0}^{p - 1} \{u\} \left( \frac{u^2 -
    \tfrac{r}{4}}{p} \right)  = -p\delta_{-r/4} = -p\delta_{-r}.
\end{equation*}
Combining the above calculations yields the result.
\end{proof}

We conclude with a brief comment on the ``globalization'' of the error
term for $S_{\hat{\tau}}$, analogously to Theorem~\ref{thm:main} for
$S_\tau$.  Take $E_1$, $E_2$, and $\tau$ as in
\eqref{eq:tau_formulas}, with $a,b\in \Z$.  For simplicity, suppose $p
> a> 0$, so that $a = \{a\}$. If $a$ is even, we have the explicit
bound $|\hat{R}_{a,b}| \leq \frac{a}{2} + 1$ for the error term. If
$a$ is odd, the behavior of the error is different. For $p > 2a$, we have
$\eta = \frac{p-1}{2} + \frac{a+1}{2}$ exactly, and so the error term
has roughly $\frac{p}{2}$ terms. Further, from \cite[Ch.~5, \S
4]{BorShaf:1966}, we have
\begin{equation*}  
\sum_{x=1}^{\frac{p-1}{2}} \left( \frac{-x}{p} \right) =
\left\{ \begin{array}{rcl}
0 & & p \equiv 1, 5 \pmod{8} \\
-3h_p & & p \equiv 3 \phantom{, 5} \pmod{8} \\
-h_p & & p \equiv 7 \phantom{, 5} \pmod{8}.
\end{array} \right.
\end{equation*}
Thus, the term $\hat{R}_{a,b}$ may dwarf $h_p$! Rather, if we define
\begin{equation*}
\hat{\rho}_{a,b} := \delta_{-r} + \sum_{x=1}^{\frac{a+1}{2}} \left(
  \frac{ -\frac{p-1}{2} - x}{p} \right),
\end{equation*}
then $|\hat{\rho}_{a,b}| \leq \frac{a+3}{2}$, and
\eqref{eq:Stauhat} can be rewritten:
\begin{equation*}
-\frac{1}{p} S_{\hat{\tau}} = \hat{\rho}_{a,b} +
\left\{ \begin{array}{rcl}
0 & & p \not\equiv 3 \pmod{8} \\
-2h_p & & p \equiv 3 \pmod{8}.
\end{array} \right.
\end{equation*}
This gives a better description of the behavior of the sum
$S_{\hat{\tau}}$ when $a$ is odd.

\section*{Acknowledgements}
We are grateful to Kirti Joshi for first bringing our
attention to the interesting divisibility properties of weighted
character sums on elliptic curves, and for many fruitful conversations
during our research. We give our thanks to Joe Silverman and Matt Papanikolas 
for helpful suggestions. We wish to recognize the contribution
of {\tt Sage} \cite{sage:2010}, which was invaluable for computational
experimentation related to this project. Finally, we appreciate the
many helpful comments and suggestions of the referee during the revision of this article.

\bibliographystyle{plain}
\bibliography{isogs}

\begin{thebibliography}{10}

\bibitem{BorShaf:1966}
Z.I. Borevich and I.R. Shafarevich.
\newblock {\em Number Theory}, volume~20 of {\em Pure and Applied Mathematics}.
\newblock Academic Press, New York, 1966.
\newblock Translated by Newcomb Greenleaf.

\bibitem{Davenport:2000}
Harold Davenport.
\newblock {\em Multiplicative number theory}, volume~74 of {\em Graduate Texts
  in Mathematics}.
\newblock Springer-Verlag, New York, third edition, 2000.
\newblock Revised and with a preface by Hugh L. Montgomery.

\bibitem{Deuring:1941}
Max Deuring.
\newblock Die {T}ypen der {M}ultiplikatorenringe elliptischer
  {F}unktionenk\"orper.
\newblock {\em Abh. Math. Sem. Hansischen Univ.}, 14:197--272, 1941.

\bibitem{Joux:1995}
A.~Joux and F.~Morain.
\newblock Sur les sommes de caract\`eres li\'ees aux courbes elliptiques \`a
  multiplication complexe.
\newblock {\em J. Number Theory}, 55(1):108--128, 1995.

\bibitem{Montgomery:2007}
Hugh~L. Montgomery and Robert~C. Vaughan.
\newblock {\em Multiplicative number theory. {I}. {C}lassical theory},
  volume~97 of {\em Cambridge Studies in Advanced Mathematics}.
\newblock Cambridge University Press, Cambridge, 2007.

\bibitem{Padma:1996}
R.~Padma and S.~Venkataraman.
\newblock Elliptic curves with complex multiplication and a character sum.
\newblock {\em J. Number Theory}, 61(2):274--282, 1996.

\bibitem{Silverman:AEC}
Joseph~H. Silverman.
\newblock {\em The arithmetic of elliptic curves}, volume 106 of {\em Graduate
  Texts in Mathematics}.
\newblock Springer-Verlag, New York, 1986.

\bibitem{Silverman:AEC2}
Joseph~H. Silverman.
\newblock {\em Advanced topics in the arithmetic of elliptic curves}, volume
  151 of {\em Graduate Texts in Mathematics}.
\newblock Springer-Verlag, New York, 1994.

\bibitem{sage:2010}
W.\thinspace{}A. Stein et~al.
\newblock {\em {S}age {M}athematics {S}oftware ({V}ersion 4.3.3)}.
\newblock The Sage Development Team, 2009.
\newblock {\tt http://www.sagemath.org}.

\bibitem{Williams:1979}
Kenneth~S. Williams.
\newblock Evaluation of character sums connected with elliptic curves.
\newblock {\em Proc. Amer. Math. Soc.}, 73(3):291--299, 1979.

\end{thebibliography}
\end{document}